\newcommand{\CC}{\mathbb C}
\newcommand{\RR}{\mathbb R}
\newcommand{\ep}{\varepsilon}
\newcommand{\TT}{\mathbb T}
\newcommand{\pat}{\partial_t}
\newcommand{\pax}{\partial_x}
\newcommand{\paa}{\partial_\alpha}
\newcommand{\da}{\partial_\alpha}
\newcommand{\al}{\alpha}
\newcounter{comentcount}
\newcounter{teocount}
\newcounter{propcount}
\newtheorem{lem}{Lemma}
\newtheorem{coro}{Corollary}
\newtheorem{teo}[teocount]{Theorem}
\newtheorem{defi}{Definition}
\newenvironment{coment}
{\stepcounter{comentcount} {\bf \tt Remark} {\bf\tt\arabic{comentcount}} }{ }
\title{On turning waves for the inhomogeneous Muskat problem: a computer-assisted proof}
\author{Javier G\'omez-Serrano$^{\mbox{{\footnotesize 1}}}$ and Rafael Granero-Belinch\'on$^{\mbox{{\footnotesize 2}}}$}
\begin{document}

\maketitle
\footnotetext[1]{Email: \texttt{jg27@math.princeton.edu},\\
Princeton University,\\
Department of Mathematics, \\
Fine Hall, Washington Road, \\
Princeton, NJ 08544-1000.
}
\footnotetext[2]{Email: \texttt{rgranero@math.ucdavis.edu},\\
Department of Mathematics,\\
University of California, Davis,\\
One Shields Avenue,\\
Davis, 95616.
}

\vspace{0.3cm}
\begin{abstract}
We exhibit a family of graphs that develop turning singularities (i.e. their Lipschitz seminorm blows up and they cease to be a graph, passing from the stable to the unstable regime) for the inhomogeneous, two-phase Muskat problem where the permeability is given by a nonnegative step function. We study the influence of different choices of the permeability and different boundary conditions (both at infinity and considering finite/infinite depth) in the development or prevention of singularities for short time. In the general case (inhomogeneous, confined) we prove a bifurcation diagram concerning the appearance or not of singularities when the depth of the medium and the permeabilities change. The proofs are carried out using a combination of classical analysis techniques and computer-assisted verification.
\end{abstract}
\vspace{0.3cm}

\textbf{Keywords}: Darcy's law, inhomogeneous Muskat problem, blow-up, computer-assisted, singularity, turning, water waves.

\textbf{MSC (2010)}: 35R35, 65G30, 76B03, 35Q35.

\textbf{Acknowledgments}: The authors are supported by the Grant MTM2011-26696 from Ministerio de Ciencia e Innovaci\'on (MICINN) and MINECO: ICMAT Severo Ochoa project SEV-2011-0087. Javier G\'omez-Serrano is supported by StG-203138CDSIF of the ERC. Rafael Granero-Belinch\'on is grateful to Luigi Berselli and Rafael Orive for productive comments in an early version of these results. Javier G\'omez-Serrano thanks Rafael de la Llave for fruitful discussions. We thank Diego C\'ordoba for his guidance and useful suggestions. We wish to thank the Instituto de Ciencias Matem\'aticas (Madrid) for computing facilities.

\maketitle

\section{Introduction}\label{IIIsec1}
In this paper we study the evolution of the interface between two different incompressible fluids with the same viscosity in a two-dimensional porous medium. This problem is worthwhile studying since it is a model of an aquifer or an oil well (see \cite{muskat1937flow} and the references therein) or a model of a geothermal reservoir (see \cite{CF} and the references therein). We address the differences between the dynamics of the singularity of turning waves when the assumptions of the model change. In this context, we will refer to a turning singularity whenever we speak about curves such that initially have a point with vertical tangent, backwards in time can be parametrized as graphs and forward in time they can not, as seen in Figure \ref{FigTurning}.

\begin{figure}[h!]\centering
\includegraphics[scale=0.35]{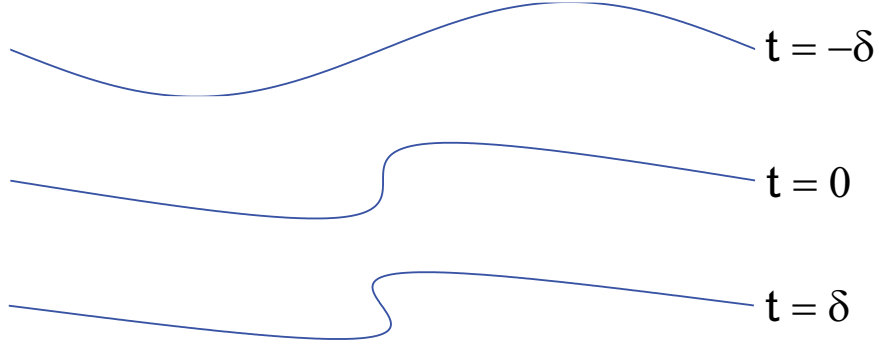}
\caption{Turning singularity: graph, vertical tangent and turning of the interface.}
\label{FigTurning}
\end{figure}

We notice that, according to our definition, a point with vertical tangent, by itself, is not a turning singularity, since the curve can recoil and move into the stable regime where it can be parametrized as a graph. In order to be considered as a singularity, the curve necessarily has to turn over. Parametrized as a curve, the interface remains analytic while parametrized as a graph has a singularity. In other words, the parametrization blows up. In this framework, the singularity is equivalent to the fact that $\partial_\alpha z_1<0$ at some point for short time.

We consider two incompressible fluids with the same viscosity but different densities, $\rho^1$ and $\rho^2$, evolving in a two dimensional porous medium with permeability $\kappa(x)$. The velocity field obeys Darcy's law:
\begin{equation}\label{IIIdarcy}
\mu\frac{v}{\kappa}=-\nabla p-g(0,\rho)^t,
\end{equation}
where $\mu$ is the viscosity and $g$ is the acceleration due to gravity, and the incompressibility condition
\begin{equation}\label{IIIincom}
\nabla\cdot v=0.
\end{equation}
We take $\mu=g=1$. The fluids also satisfy the conservation of mass equation
\begin{equation}\label{IIIconser}
\pat\rho+v\cdot\nabla\rho=0.
\end{equation}
Given $l>0$, the spatial domains considered are $\Omega=\RR\times(-l,l),\RR^2$ and $\TT\times\RR.$ We denote by $S^1$ the volume occupied by the fluid with density $\rho^1$ and by $S^2$ the volume occupied by the fluid with density $\rho^2$. The interface between both fluids is the curve $z(\alpha,t)$.
Given $0<h_2<l$, we consider that the permeability is
\begin{equation}\label{IIIperm}
\kappa(x)=\kappa^1\textbf{1}_{\{(x,y)\in\Omega, y>-h_2\}}+\kappa^2\textbf{1}_{\{(x,y)\in\Omega, y\leq-h_2\}},
\end{equation}
\emph{i.e.} the curve $h(\alpha)=(\alpha,-h_2)$ separates the regions with different permeabilities. We assume that the initial curve $z(\alpha,0)$ does not touch the curve $h(\alpha)$. Moreover we consider that $z(\alpha,0)$ is in the region with permeability equal to $\kappa^1$. See Figure \ref{FigInhomogeneous} for an illustration of the previous domains.

\begin{figure}[h!]\centering
\includegraphics[scale=0.35]{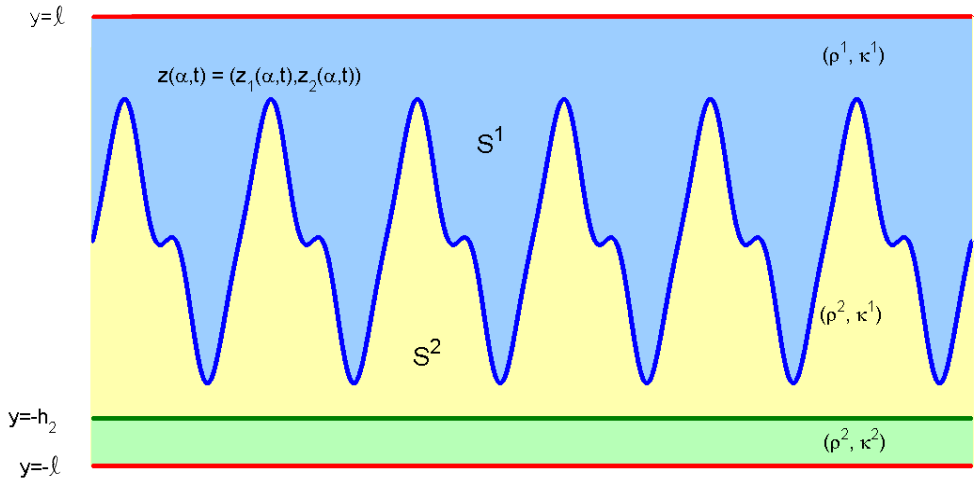}
\caption{Situation of the different fluids and permeabilities.}
\label{FigInhomogeneous}
\end{figure}

We define the Rayleigh-Taylor condition
$$
RT(\alpha,t)=-(\nabla p^2(z(\alpha,t))-\nabla p^1(z(\alpha,t)))\cdot\partial_\alpha^\bot z(\alpha,t).
$$
Fix $t>0$. If $RT(\alpha,t)>0 , \;\forall\alpha\in\RR$ we will say that the curve is in the Rayleigh-Taylor stable regime and if $RT(\alpha,t)<0$ for some $\alpha$, we will say that the curve is in the Rayleigh-Taylor unstable regime. We note that when the interface is a graph and is parametrized as $(\alpha,f(\alpha))$, this function reduces to
$$
RT=g(\rho^2-\rho^1),
$$
and the curve is in the RT stable regime whenever $\rho^1<\rho^2$.

The Muskat problem where the permeability is constant and the depth is infinite has been studied in many works. A proof of local existence of classical solutions in the Rayleigh-Taylor stable regime and ill-posedness in the unstable regime can be encountered in \cite{c-g07}. 
A maximum principle for $\|f(t)\|_{L^\infty}$ can be found in \cite{c-g09}. Moreover, the authors showed in \cite{c-g09} that if $\|\pax f_0\|_{L^\infty}<1$, then $\|\pax f(t)\|_{L^\infty}<\|\pax f_0\|_{L^\infty}$. In \cite{ccfgl}, the authors determined that the initial curve becomes analytic for every positive time and they also proved the existence of turning singularities. For other results see \cite{ambrose2004well, castro2012breakdown, Castro-Cordoba-Gancedo:recent-results-muskat, ccgs-10, c-g-o08, e-m10, KK, SCH}.

The case with finite depth (equivalently, when the permeability is supported in the strip $\RR\times(-l,l)$) has been addressed in \cite{Cordoba-GraneroBelinchon-Orive:confined-muskat}. In this work the authors found the existence and uniqueness of solutions in the RT stable regime, a smoothing effect, ill-posedness in the RT unstable regime, a maximum principle and a decay estimate for $\|f(t)\|_{L^\infty}$ which is slower than in the case where the depth is infinity. The authors also proved that if the initial datum has small amplitude and slope (in a very precise sense depending on the depth), $\|\pax f(t)\|_{L^\infty}$ verifies a uniform bound and under more restrictive conditions for the initial amplitude and slope, the derivative obeys a maximum principle. We remark that the condition is not only on the size of the slope. Moreover, in this region there are global weak solutions (see \cite{G}).

The Muskat problem where the permeability is given by \eqref{IIIperm} has been treated in \cite{Berselli-Cordoba-GraneroBelinchon:local-solvability-singularities-muskat}. For this model the authors proved well-posedness and the existence of turning singularities when the physical parameters are in a precise range. One of our main contributions is to extend the range of physical parameters where the waves turn by means of a computer-assisted proof. In \cite{Berselli-Cordoba-GraneroBelinchon:local-solvability-singularities-muskat} there is numerical evidence that supports the aforementioned results.

In the present paper we compare different models. First, we show the existence of waves such that when the depth is $l=\pi/2$ the wave turns and if $l=\infty$ then the slope of the wave decreases for a short enough time. In \cite{Cordoba-GraneroBelinchon-Orive:confined-muskat} there is numerical evidence of this result. The same result is true when the Muskat equation is replaced with the water waves equations (or free boundary incompressible Euler equations, see \cite{Castro-Cordoba-Fefferman-Gancedo-GomezSerrano:finite-time-singularities-water-waves-surface-tension, Castro-Cordoba-Fefferman-Gancedo-GomezSerrano:splash-water-waves, Castro-Cordoba-Fefferman-Gancedo-GomezSerrano:finite-time-singularities-free-boundary-euler} and the references therein), which are given by
\begin{equation}\label{ww}
\left\{\begin{array}{lll}
\pat z(\alpha,t) & = & BR(\varpi,z)z(\alpha,t)+c(\al,t)\paa z(\al,t) \\
\pat \varpi(\alpha,t) & = & -2\pat BR(\varpi,z)z(\alpha,t)\cdot\paa z(\alpha,t)-\paa\left(\frac{|\varpi|^2}{4|\paa z|}\right)+\paa\left(c\varpi\right)\\
&&+2c(\al,t)\paa BR(\varpi,z)z(\alpha,t)\cdot\paa z(\al,t)-2g\paa z_2(\al,t),
\end{array}\right.
\end{equation}
where $z$ is the interface, $\varpi$ is the amplitude of the vorticity, $c$ accounts for the reparametrization freedom of the curve and $BR$ denotes the Birkhoff-Rott kernel (see \eqref{IIIeq2} in Section \ref{IIIsec2} below). Notice that this kernel depends on the domain.

Second, we study a model where the permeability is given by a nonnegative step function. In this case, we are interested in the effect of this inhomogeneity in the interface. For this model we obtain that with different permeabilities there is no global in time solution in the Rayleigh-Taylor stable regime corresponding to an arbitrary, large (in $C^1$) initial data which is a graph. Moreover, if the permeabilities verify some conditions we get that they can help or prevent the formation of turning singularities for some families of initial data. These results are true for both the periodic and the flat at infinity cases.

Finally, we consider the most general model where, in addition to the change of permeabilities, the medium is bounded by impervious walls. For this case, we define a family of curves depending on the height $h_2$ where the permeability jump is located in a way that the curves are located above $h_2$. For this family, we perform rigorous computations of a bifurcation diagram in which the parameters are $h_2$ and the permeability values and the outcomes are $\{\text{turning, not turning, unknown}\}$. We obtain that the family exhibits different behaviours depending on $h_2$: for some of them the outcome is independent of the permeability values and for some it is not. Moreover, we see that the property of turning/not turning is persistent, \emph{i.e.} small variation of the parameters give rise to the same outcome and we prove the existence of a smooth curve in parameter space that delimits turning from not turning.

The role of the permeability is rather subtle. Assuming that the initial data can be parametrized as a graph, in \cite[Section 4]{Berselli-Cordoba-GraneroBelinchon:local-solvability-singularities-muskat}, the numerics show that if $\kappa^1-\kappa^2<0$ the evolution for $\|f(t)\|_{L^\infty}$ is smoother than in the case with only one permeability ($\kappa^1-\kappa^2=0$) in the sense that the decay of this quantity is faster. In the same way, if $\kappa^1-\kappa^2>0$ the decay of the $L^\infty$ norm is slower than in the homogeneous case. However, when the evolution of $\|\pax f(t)\|_{L^\infty}$ is addressed, the same numerics show that the situation is reversed. For the Lipschitz seminorm, the decay is faster in the case $\kappa^1-\kappa^2>0$ than in the homogeneous case. 

The use of computers to perform floating-point arithmetic can lead to numerical errors. To overcome this difficulty and prove rigorous results, we use the so-called \emph{interval arithmetics}, in which instead of working with arbitrary real numbers, we perform computations over intervals which have representable numbers as endpoints. On these objects, an arithmetic is defined in such a way that we are guaranteed that for every $x \in X, y \in Y$
\begin{align*}
x \star y \in X \star Y,
\end{align*}

for any operation $\star$. For example,
\begin{align}
[\underline{x},\overline{x}] + [\underline{y},\overline{y}] & = [\underline{x} + \underline{y}, \overline{x} + \overline{y}] \nonumber \\
[\underline{x},\overline{x}] \times [\underline{y},\overline{y}] & = [\min\{\underline{x}\underline{y},\underline{x}\overline{y},\overline{x}\underline{y},\overline{x}\overline{y}\},\max\{\underline{x}\underline{y},\underline{x}\overline{y},\overline{x}\underline{y},\overline{x}\overline{y}\}] \nonumber \\
\max\{[\underline{x},\overline{x}],[\underline{y},\overline{y}]\} & = [\max\{\underline{x},\underline{y}\},\max\{\overline{x},\overline{y}\}].
\label{defMAX}
\end{align}
We can also define the interval version of a function $f(X)$ as an interval $I$ such that for every $x \in X$, $f(x) \in I$. Rigorous computation of integrals has been theoretically developed since the seminal work of Moore and many others \cite{Berz-Makino:high-dimensional-quadrature,Kramer-Wedner:adaptive-gauss-legendre-verified-computation,Lang:multidimensional-verified-gaussian-quadrature,Moore-Bierbaum:methods-applications-interval-analysis}, and has had applications in physics \cite{Holzmann-Lang-Schutt:gravitation-verified-quadrature}. An important ingredient of our proofs will be the rigorous computation of some integrals. Having a tight enclosure of the result is crucial for the sake of determining if an initial condition will develop a turning singularity or not for short time. In order to perform the rigorous computations we used the C-XSC library \cite{CXSC}.

The organization of this paper is as follows: the contour equations are obtained in Section \ref{IIIsec2}, a precise statement of the theorems is given in Section \ref{MainResultsMuskat}, their proofs can be found in Section \ref{SectionProofs} and the codes in the supplementary material. The codes are intended to be read in order. Some of the strategies of the Theorems are built upon the ones used for the previous ones. Moreover, we have sacrificed performance for readability in the first 3 Theorems where the computation was less intensive than in the last one. In any case, we have tried to achieve the optimal asymptotic complexity but without optimizing in a very deep low level.

\textbf{Notation:} we denote $(a,b)^\perp=(-b,a)$ and define
$$
\mathcal{K}=\frac{\kappa^1-\kappa^2}{\kappa^1+\kappa^2}\text{ and } \bar{\rho}=\frac{\kappa^1(\rho^2-\rho^1)}{4\pi}.
$$
We notice that $\mathcal{K}$ is an dimensionless number and satisfies $-1 < \mathcal{K} < 1$. From now on, we also drop the dependence in $t$.

For readability purposes, instead of writing the intervals as for example $[123456,123789]$ we will refer to them as $123^{456}_{789}$.

\section{The contour equation}\label{IIIsec2}
In this section we obtain the contour equation. Now we consider the bounded porous medium $\RR\times (-l,l)$. This regime is equivalent to the case with more than two $\kappa^i$ because the boundaries can be understood as regions with $\kappa=0$. Given a scalar function $a$ and curves $f=(f_1,f_2),g=(g_1,g_2)$, we denote the Birkhoff-Rott integral by
\begin{equation}
\label{IIIeq2}
BR(a,g)f(\alpha)=\text{P.V.}\int_\RR a(\beta) BS(f(\alpha),g(\beta))d\beta,
\end{equation}
where $BS$ denotes the Biot-Savart law in $\RR\times(-l,l)$, which is given by the kernel (see \cite{Cordoba-GraneroBelinchon-Orive:confined-muskat})
\begin{multline}\label{IIIBSconf}
BS(x,y,\mu,\nu)=\frac{1}{8l}\left(\frac{-\sin\left(\frac{\pi}{2l}(y-\nu)\right)}{\cosh\left(\frac{\pi}{2l}(x-\mu)\right)-\cos\left(\frac{\pi}{2l}(y-\nu)\right)}+\frac{\sin\left(\frac{\pi}{2l}(y+\nu)\right)}{\cosh\left(\frac{\pi}{2l}(x-\mu)\right)+\cos\left(\frac{\pi}{2l}(y+\nu)\right)},\right.\\
\left.\frac{\sinh\left(\frac{\pi}{2l}(x-\mu)\right)}{\cosh\left(\frac{\pi}{2l}(x-\mu)\right)-\cos\left(\frac{\pi}{2l}(y-\nu)\right)}-\frac{\sinh\left(\frac{\pi}{2l}(x-\mu)\right)}{\cosh\left(\frac{\pi}{2l}(x-\mu)\right)+\cos\left(\frac{\pi}{2l}(y+\nu)\right)}\right).
\end{multline}
To simplify notation we take the depth to be $l=\pi/2$. Notice that if $z(\al)$ is a solution of the Muskat problem \eqref{IIIdarcy}-\eqref{IIIconser} with depth $l=\pi/2$, then $z^\lambda(\al,t)=\lambda z(\lambda\al,t/\lambda)$ is the interface corresponding to a solution of the Muskat problem with depth equal to $l=\pi\lambda/2$. 


Due to \eqref{IIIdarcy},\eqref{IIIincom}, \eqref{IIIconser} and \eqref{IIIperm} the vorticity concentrates on the two interfaces as long as a weak solution exists for the full system considered. Thus, we can write it as
\begin{equation}\label{IIIvort}
\omega(\alpha,t)=\varpi_1(\alpha,t)\delta((x,y)-z(\alpha,t)) + \varpi_2(\alpha,t)\delta((x,y)-h(\alpha)),
\end{equation}
where $\varpi_1$ and $\varpi_2$ stand for the different vorticity amplitudes. Computing the limits of the velocity towards the two interfaces we see that
\begin{equation}
\label{IIIeq3}
v^\pm(z(\alpha))=\lim_{\epsilon\rightarrow0}v(z(\alpha)\pm\epsilon\paa^\perp z(\alpha))=BR(\varpi_1,z)z(\al)+BR(\varpi_2,h)z(\al)\mp\frac{1}{2}\frac{\varpi_1(\alpha)}{|\paa z(\alpha)|^2}\paa z(\alpha),
\end{equation}
and
\begin{equation}
\label{IIIeq4}
v^\pm(h(\alpha))=\lim_{\epsilon\rightarrow0}v(h(\alpha)\pm\epsilon\paa^\perp h(\alpha))=BR(\varpi_1,z)h(\al)+BR(\varpi_2,h)h(\al)\mp\frac{1}{2}\frac{\varpi_2(\alpha)}{|\paa h(\alpha)|^2}\paa h(\alpha).
\end{equation}
We observe that $v^+(z(\alpha))$ is the limit inside $S^1$ (the upper subdomain) and $v^-(z(\alpha))$ is the limit inside $S^2$ (the lower subdomain). The curve $z(\alpha)$ does not touch the curve $h(\alpha)$, therefore the limits for the curve $h$ are in the same subdomain $S^2$.

Using Darcy's Law, we have
\begin{align*}
(v^-(z(\alpha))-v^+(z(\alpha)))\cdot\paa z(\alpha)&=\kappa^1\left(-\paa(p^-(z(\alpha))-p^+(z(\alpha)))\right)-\kappa^1(\rho^2-\rho^1)\paa z_1(\alpha)\\
&=0-\kappa^1(\rho^2-\rho^1)\paa z_2(\alpha),
\end{align*}
since the pressure is continuous along the interface (see \cite[Section 2]{c-c-g10}). Using \eqref{IIIeq3} we conclude
\begin{equation}
\label{IIIeq5}
\varpi_1(\alpha)=-\kappa^1(\rho^2-\rho^1)\paa z_2(\alpha).
\end{equation}
We need to determine $\varpi_2$. We consider
\begin{eqnarray*}
\left[\frac{v}{\kappa}\right]&\equiv &\left(\frac{v^-(h(\alpha))}{\kappa^2}-\frac{v^+(h(\alpha))}{\kappa^1}\right)\cdot\paa h(\alpha)\\
&=&-\paa (p^-(h(\alpha))-p^+(h(\alpha)))\\
&=&0,
\end{eqnarray*}
where the first equality is obtained by Darcy's Law. Expression \eqref{IIIeq4} leads us to
$$
\left[\frac{v}{\kappa}\right]=\left(\frac{1}{\kappa^2}-\frac{1}{\kappa^1}\right)\left(BR(\varpi_1,z)h(\al)+BR(\varpi_2,h)h(\al)\right)\cdot\paa h(\alpha)+\left(\frac{1}{2\kappa^2}+\frac{1}{2\kappa^1}\right)\varpi_2.
$$
We have a Fredholm integral equation of the second kind:
\begin{equation}\label{IIIw2def}
\varpi_2(\alpha)+\frac{\mathcal{K}}{2\pi}\;\text{P.V.}\int_\RR\frac{\varpi_2(\beta)\sin(2h_2)}{\cosh(\alpha-\beta)+\cos(2h_2)}d\beta=-2\mathcal{K}BR(\varpi_1,z)h(\al)\cdot(1,0).
\end{equation}

We define the Fourier transform as
$$
\mathcal{F}(f)(\zeta)=\frac{1}{\sqrt{2\pi}}\int_{\RR}e^{-ix\zeta}f(x)dx,
$$
and using some of its basic properties, we obtain
$$
\mathcal{F}(\varpi_2)(\zeta)\left(1+\frac{\mathcal{K}}{\sqrt{2\pi}}\mathcal{F}\left(\frac{\sin(2h_2)}{\cosh(x)+\cos(2h_2)}\right)(\zeta)\right)=-2\mathcal{K}\mathcal{F}(BR(\varpi_1,z)h\cdot(1,0))(\zeta).
$$
In \cite{Berselli-Cordoba-GraneroBelinchon:local-solvability-singularities-muskat} the equation for $\varpi_2$ is solved for every $|\mathcal{K}|<\delta(h_2)$ with
\begin{equation}\label{IIIdelta}
\delta(h_2)=\min\left\{1,\frac{\sqrt{2\pi}}{\displaystyle \max_{\zeta}\left|\mathcal{F}\left(\frac{\sin(2h_2)}{\cosh(x)+\cos(2h_2)}\right)(\zeta)\right|}\right\}.
\end{equation}
We have the following result concerning the range of correct parameters:
\begin{lem}\label{lemacitado}
Let $0<h_2<\pi/2$ be a constant, then $\delta(h_2)=1$. Thus, there exists a solution to \eqref{IIIw2def} for every $-1<\mathcal{K}<1$.
\end{lem}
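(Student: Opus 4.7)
The plan is to compute the Fourier transform $\mathcal{F}\left(\frac{\sin(2h_2)}{\cosh(x)+\cos(2h_2)}\right)(\zeta)$ in closed form, check that its maximum modulus is bounded by $\sqrt{2\pi}$, and conclude from the definition of $\delta(h_2)$.

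First I would set $a=2h_2\in(0,\pi)$ and compute the (unnormalized) Fourier integral $F(\zeta)=\int_\RR \frac{\sin(a)}{\cosh(x)+\cos(a)}e^{-i\zeta x}\,dx$ by residues. The integrand is $2\pi i$-periodic in the imaginary direction, with simple poles at $x = i(\pi-a)+2\pi i n$ and $x=i(\pi+a)+2\pi i n$; the derivative of the denominator at these points is $\pm i\sin(a)$, so each residue is $\mp i$. Closing the contour in the upper half plane for $\zeta<0$, summing the resulting geometric series, and simplifying yields
\begin{equation*}
F(\zeta)=\frac{2\pi\sinh(a\zeta)}{\sinh(\pi\zeta)},
\end{equation*}
which, being even in $\zeta$, is valid for all real $\zeta$ (interpreted as $2a$ at $\zeta=0$). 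Hence $\mathcal{F}\left(\frac{\sin(2h_2)}{\cosh(x)+\cos(2h_2)}\right)(\zeta)=\sqrt{2\pi}\,\frac{\sinh(2h_2\zeta)}{\sinh(\pi\zeta)}$.

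Next I would bound this quantity. Since $0<2h_2<\pi$ and $\sinh$ is strictly increasing and odd, $|\sinh(2h_2\zeta)|<|\sinh(\pi\zeta)|$ for $\zeta\neq 0$, so the ratio is strictly less than $1$. To locate its maximum, I would differentiate: the numerator of the derivative reduces, after product-to-sum formulas, to an expression whose sign is controlled by comparing $\frac{\sinh(b\zeta)}{b\zeta}$ and $\frac{\sinh(c\zeta)}{c\zeta}$ with $b=\pi-2h_2<c=\pi+2h_2$. Since $x\mapsto\sinh(x)/x$ is strictly increasing on $(0,\infty)$ (as seen from its power series), the ratio $\sinh(2h_2\zeta)/\sinh(\pi\zeta)$ is monotonically decreasing in $|\zeta|$ and attains its supremum $2h_2/\pi$ at $\zeta=0$. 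Therefore
\begin{equation*}
\max_\zeta\left|\mathcal{F}\left(\frac{\sin(2h_2)}{\cosh(x)+\cos(2h_2)}\right)(\zeta)\right|=\sqrt{2\pi}\cdot\frac{2h_2}{\pi}<\sqrt{2\pi}.
\end{equation*}

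Plugging into \eqref{IIIdelta} gives $\sqrt{2\pi}/\max(\cdot)=\pi/(2h_2)>1$, so $\delta(h_2)=\min\{1,\pi/(2h_2)\}=1$, and the Fredholm equation \eqref{IIIw2def} is solvable for every $-1<\mathcal{K}<1$ by the Neumann series argument already used in \cite{Berselli-Cordoba-GraneroBelinchon:local-solvability-singularities-muskat}. The only real technical step is the residue computation; the monotonicity in $\zeta$ is elementary, and once the explicit formula is available everything else is immediate.
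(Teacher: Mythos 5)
Your proposal is correct and follows essentially the same route as the paper: an explicit residue computation yielding $\mathcal{F}\left(\frac{\sin(2h_2)}{\cosh(x)+\cos(2h_2)}\right)(\zeta)=\sqrt{2\pi}\,\frac{\sinh(2h_2\zeta)}{\sinh(\pi\zeta)}$ for $\zeta<0$, extended by evenness and continuity, followed by the observation that the maximum modulus is $\sqrt{2\pi}\cdot\frac{2h_2}{\pi}<\sqrt{2\pi}$, so $\delta(h_2)=\min\{1,\pi/(2h_2)\}=1$. The only (welcome) difference is that you make explicit the monotonicity of $\zeta\mapsto\sinh(2h_2\zeta)/\sinh(\pi\zeta)$ via the increasing function $x\mapsto\sinh(x)/x$, a step the paper leaves implicit when it asserts the formula for $\delta(h_2)$.
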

\begin{proof}
We prove the result by computing explicitly
$$
J = \mathcal{F}\left(\frac{\sin(2h_2)}{\cosh(x)+\cos(2h_2)}\right)(\zeta)=\frac{1}{\sqrt{2\pi}}\int_\RR e^{-ix\zeta}\frac{\sin(2h_2)}{\cosh(x)+\cos(2h_2)}dx.
$$
Take $\zeta\in\RR$, $\zeta<0$. We consider the complex extension
$$
I^{j}=\int_{\partial\Gamma^{j}} e^{-iz\zeta}\frac{\sin(2h_2)}{\cosh(z)+\cos(2h_2)}dz,
$$
where $\Gamma^{j}=(-\pi-2j\pi,\pi+2j\pi)\times(0,2j \pi)\in\CC, \; j \in \mathbb{N}$. The poles of the function (see Figure \ref{FigPoles}) are
$$
\gamma_k^-=(\pi-2h_2+2k\pi)i\text{ and }\gamma_k^+=(\pi+2h_2+2k\pi)i, \quad k \in \mathbb{Z}.
$$
Given that
$$
\cosh(z)+\cos(2h_2)=2\cosh\left((z+2h_2i)/2\right)\cosh\left((z-2h_2i)/2\right),
$$
$\gamma_k^\pm$ are simple poles. 
We split the contour integral in
$$
I^{j}=I_{1}^{j} + I_{2}^{j} + I_{3}^{j} + I_{4}^{j},
$$
with
\begin{align*}
I_{1}^{j}& =\int^{\pi+2j\pi}_{-(\pi+2j\pi)}\frac{e^{-ix\zeta}\sin(2h_2)}{\cosh(x)+\cos(2h_2)}dx, \\
I_{2}^{j}& =\int_{\pi+2j\pi}^{-(\pi+2j\pi)}\frac{e^{-i(x+2 \pi j i)\zeta}\sin(2h_2)}{\cosh(x+2 \pi j i)+\cos(2h_2)}dx, \\
I_{3}^{j}& =\int_0^{2j\pi}\frac{e^{-i(\pi+2j\pi+iy)\zeta}\sin(2h_2)}{\cosh(\pi+2j\pi+iy)+\cos(2h_2)}dy, \\
I_{4}^{j}& =\int_{2j\pi}^0\frac{e^{-i(-\pi-2j\pi+iy)\zeta}\sin(2h_2)}{\cosh(-\pi-2j\pi+iy)+\cos(2h_2)}dy.
\end{align*}

\begin{figure}[h!]\centering
\includegraphics[scale=0.5]{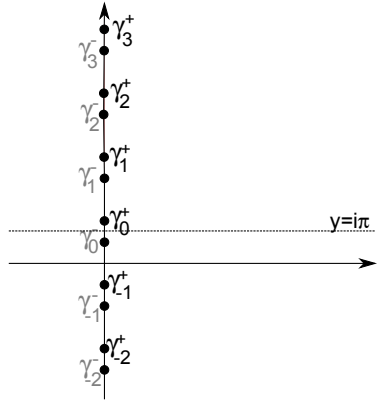}
\caption{Situation of the poles: $\gamma_{i}^{+}$ in black, $\gamma_{i}^{-}$ in grey.}
\label{FigPoles}
\end{figure}

Using classical trigonometric identities, we get
$$
|I_{2}^{j}|\leq\int_{-\infty}^{\infty}\frac{e^{2j\pi\zeta}\sin(2h_2)}{\cosh(x)+\cos(2h_2)}dx\leq c_{h_2}e^{2j\pi\zeta},
$$
which tends to zero when $j$ tends to infinity since $\zeta < 0$. We can bound the third integral as
\begin{multline*}
|I_{3}^{j}|\leq\int_0^{2j\pi}\frac{e^{y\zeta}}{|\cosh(\pi+2j\pi)\cos(y)+\sinh(\pi+2j\pi)\sin(y)+\cos(2h_2)|}dy\\
\leq\int_0^{\infty}\frac{e^{y\zeta}}{(\cosh(\pi+2j\pi)-1)^2+\cos(2h_2)-2}dy.
\end{multline*}
The same remains valid for $I_{4}^{j}$. Then, taking the limit $j\rightarrow\infty$:
$$
J = \lim_{j \to \infty}\frac{1}{\sqrt{2\pi}}I^{j}=\frac{1}{\sqrt{2\pi}}\int^{\infty}_{-\infty}\frac{e^{-ix\zeta}\sin(2h_2)}{\cosh(x)+\cos(2h_2)}dx.
$$
By the Residue Theorem, this implies
\begin{align*}
I^{j}& =2\pi i\sum_{k=0}^{j}\text{Res}\left(\frac{e^{-iz\zeta}\sin(2h_2)}{\cosh(z)+\cos(2h_2)},\gamma_k^\pm\right) \\
J & = \lim_{j \to \infty}\frac{1}{\sqrt{2\pi}}I^{j} = 2\pi i \sum_{k\geq0}\frac{1}{\sqrt{2\pi}}ie^{\pi\zeta}(e^{2\pi\zeta})^k2\sinh(2h_2\zeta)=
\sqrt{2\pi}\frac{\sinh(2h_2\zeta)}{\sinh(\pi\zeta)}.
\end{align*}
The result can be easily extended to every $\zeta > 0$ by the evenness of $\frac{\sin(2h_2)}{\cosh(x)+\cos(2h_2)}$ and to $\zeta = 0$ by the the continuity of the Fourier transform. Finally we obtain
$$
\delta(h_2)=\min\left\{1,\frac{\pi}{2h_2}\right\},
$$
and we conclude that $\delta(h_2)=1$ for every $0<h_2<\pi/2$. Moreover, this extends the result in \cite[Remark 2]{Berselli-Cordoba-GraneroBelinchon:local-solvability-singularities-muskat}, where numerical evidence of its validity was found.
\end{proof}
Thus, for every $|\mathcal{K}|<1$, we can write the expression of $\varpi_2$ as
\begin{align}
\varpi_2(\al)=&-2\mathcal{K}BR(\varpi_1,z)h(\al)\cdot(1,0)+\frac{2\mathcal{K}^2}{2\pi}BR(\varpi_1,z)h(\al)\cdot(1,0)*G_{h_2,\mathcal{K}}\nonumber\\
=&\, 2\mathcal{K}\bar{\rho}\left[\text{P.V.}\int_\RR\paa z_2(\beta)\frac{\sin(h_2+z_2(\beta))}{\cosh(\alpha-z_1(\beta))-\cos(h_2+z_2(\beta))}d\beta\nonumber\right.\\
&-\text{P.V.}\int_\RR\paa z_2(\beta)\frac{\sin(-h_2+z_2(\beta))}{\cosh(\alpha-z_1(\beta))+\cos(-h_2+z_2(\beta))}d\beta\nonumber\\
&-\frac{\mathcal{K}}{2\pi}G_{h_2,\mathcal{K}}*\text{P.V.}\int_\RR\frac{\paa z_2(\beta)\sin(h_2+z_2(\beta))}{\cosh(\alpha-z_1(\beta))-\cos(h_2+z_2(\beta))}d\beta\nonumber\\
&+\left.\frac{\mathcal{K}}{2\pi}G_{h_2,\mathcal{K}}*\text{P.V.}\int_\RR\frac{\paa z_2(\beta)\sin(-h_2+z_2(\beta))}{\cosh(\alpha-z_1(\beta))+\cos(-h_2+z_2(\beta))}d\beta\right], \label{IIIw2defb}
\end{align}
where
$$
G_{h_2,\mathcal{K}}(\xi)=\mathcal{F}^{-1}\left(\frac{\mathcal{F}\left(\frac{\sin(2h_2)}{\cosh(x)+\cos(2h_2)}\right)}{1+\frac{\mathcal{K}}{\sqrt{2\pi}}\mathcal{F}\left(\frac{\sin(2h_2)}{\cosh(x)+\cos(2h_2)}\right)}\right)=\int_\RR\frac{\cos(y\xi)\sinh(2h_2 y)}{\sinh(\pi y)+\mathcal{K}\sinh(2h_2 y)}dy.$$

We observe that $G_{h_2,\mathcal{K}}$ is a function in the Schwartz class. Using
$$
\int_\RR\partial_\beta\log\left(\cosh(\al-z_1(\beta))\pm\cos(y\pm z_2(\beta))\right)d\beta=0,
$$
and adding the correct tangential term (see \cite{Berselli-Cordoba-GraneroBelinchon:local-solvability-singularities-muskat, c-c-g10, Cordoba-GraneroBelinchon-Orive:confined-muskat}), we obtain
\begin{align}
\pat z(\alpha)=&\bar{\rho}\text{P.V.}\int_\RR\frac{(\paa z(\alpha)-\paa z(\beta))\sinh(z_1(\alpha)-z_1(\beta))}{\cosh(z_1(\alpha)-z_1(\beta))-\cos(z_2(\alpha)-z_2(\beta))}d\beta\nonumber\\
&+\bar{\rho}\text{P.V.}\int_\RR\frac{(\paa z_1(\alpha)-\paa z_1(\beta),\paa z_2(\alpha)+\paa z_2(\beta))\sinh(z_1(\alpha)-z_1(\beta))}{\cosh(z_1(\alpha)-z_1(\beta))+\cos(z_2(\alpha)+z_2(\beta))}d\beta\nonumber\\
&+\frac{1}{4\pi}\text{P.V.}\int_\RR\varpi_2(\beta)BS(z_1(\alpha),z_2(\alpha),\beta,-h_2)d\beta\nonumber\\
&+\frac{\paa z(\alpha)}{4\pi}\text{P.V.}\int_\RR\varpi_2(\beta)\frac{\sin(z_2(\alpha)+h_2)}{\cosh(z_1(\alpha)-\beta)-\cos(z_2(\alpha)+h_2)}d\beta\nonumber\\
&+\frac{\paa z(\alpha)}{4\pi}\text{P.V.}\int_\RR\varpi_2(\beta)\frac{\sin(z_2(\alpha)-h_2)}{\cosh(z_1(\alpha)-\beta)+\cos(z_2(\alpha)-h_2)}d\beta\label{IIIeqv2}.
\end{align}
In the case where the fluids fill the whole plane, we can take the limit $l\rightarrow\infty$ in \eqref{IIIBSconf} and write
\begin{multline}
\label{IIIeq9}
\pat z(\alpha)=2\bar{\rho}\text{P.V.}\int_\RR\frac{z_1(\alpha)-z_1(\beta)}{|z(\alpha)-z(\beta)|^2}(\paa z(\alpha)-\paa z(\beta))d\beta\\
+\frac{1}{2\pi}\text{P.V.}\int_\RR\varpi_2(\beta)\frac{(z(\alpha)-h(\beta))^\perp}{|z(\alpha)-h(\beta)|^2}d\beta\\
+\paa z(\alpha)\frac{1}{2\pi}\text{P.V.}\int_\RR\varpi_2(\beta)\frac{z_2(\alpha)+h_2}{|z(\alpha)-h(\beta)|^2}d\beta,
\end{multline}
with
\begin{equation}
\varpi_2(\alpha)=4\mathcal{K}\bar{\rho}\text{P.V.}\int_\RR\paa z_2(\beta)\frac{h_2+z_2(\beta)}{|h(\alpha)-z(\beta)|^2}d\beta,\label{IIIeq10A}
\end{equation}
and, if the initial curve is periodic in the horizontal variable, using complex variables notation for the curve $z=(z_1,z_2)=z_1+iz_2$ and the identity
$$
\frac{1}{z}+\sum_{k\geq1}\frac{2z}{z^2-(2k\pi)^2}=\frac{1}{2\tan(z/2)},\;\;\forall z\in\CC,
$$
we get
\begin{multline}\label{IIIeq13}
\pat z(\alpha)=\bar{\rho}\text{P.V.}\int_\TT\frac{\sin(z_1(\alpha)-z_1(\beta))(\paa z(\alpha)-\paa z(\beta))d\beta}{\cosh(z_2(\alpha)-z_2(\beta))-\cos(z_1(\alpha)-z_1(\beta))}\\
+\frac{\paa z_1(\alpha)-1}{4\pi}\text{P.V.}\int_\TT\frac{\sinh(z_2(\alpha)+h_2)\varpi_2(\beta)d\beta}{\cosh(z_2(\alpha)+h_2)-\cos(z_1(\alpha)-\beta)}\\
+\frac{i}{4\pi} \text{P.V.}\int_\TT\frac{(\paa z_2(\alpha)\sinh(z_2(\alpha)+h_2)+\sin(z_1(\alpha)-h_1(\beta)))\varpi_2(\beta)d\beta}{\cosh(z_2(\alpha)+h_2)-\cos(z_1(\alpha)-\beta)},
\end{multline}
where the second vorticity amplitude can be written as
\begin{equation}
\varpi_2(\alpha)=2\bar{\rho}\mathcal{K} \text{P.V.}\int_\TT\frac{\sinh(h_2+z_2(\beta))\paa z_2(\beta)d\beta}{\cosh(-h_2-z_2(\beta))-\cos(\alpha-z_1(\beta))}.\label{IIIeq14A}
\end{equation}
\section{Statement of the results}
\label{MainResultsMuskat}
In this section we will state the theorems that will be proved in the next one. We show that the fact of having a confined medium plays a role in the mechanism for achieving turning singularities. Moreover, we also show that there are cases for which the jump in the permeabilities can lead to either prevent or promote these singularities, and cases in which the heterogeneity of the medium has no impact on whether the wave turns or not.

Notice that the confined (and homogeneous) Muskat problem corresponds to $\varpi_2=0$ in \eqref{IIIeqv2}, while the unconfined (and homogeneous) satisfies $\varpi_2=0$ in \eqref{IIIeq9}. For these cases we have the next theorem:

\begin{teo}
There exists a family of analytic curves $z(\al) = (z_1(\al),z_2(\al))$, flat at infinity, for which there exists a finite time $T$ such that the solution to the confined Muskat problem develops a turning singularity before $t=T$ and the non confined does not.
\label{ThmConfTurnsNoConfNoTurns}
\end{teo}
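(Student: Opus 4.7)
The plan is to exhibit an explicit one-parameter family of analytic profiles $z(\alpha)=(z_1(\alpha),z_2(\alpha))$, flat at infinity, satisfying $\partial_\alpha z_1(0)=0$ (a single point of vertical tangent at $\alpha=0$) and $\partial_\alpha z_1(\alpha)>0$ for $\alpha\neq 0$, so that at time $t=0$ the curve is a graph with one vertical tangent. As in the turning argument of \cite{ccfgl}, the sign of $\partial_t\partial_\alpha z_1(0,0)$ determines whether the curve is pushed into the turned (unstable) region or recoils into the graph (stable) region shortly after $t=0$: a strictly negative value, together with analyticity and the local shape of the profile, forces $\partial_\alpha z_1<0$ at a nearby point for every small $t>0$, while a strictly positive value prevents turning on a short time interval. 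The goal is therefore to choose a single profile $z$ for which this quantity is strictly negative for the confined equation \eqref{IIIeqv2} with $\varpi_2=0$ and strictly positive for the unconfined equation \eqref{IIIeq9} with $\varpi_2=0$.

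First I would differentiate \eqref{IIIeqv2} and \eqref{IIIeq9} in $\alpha$ and evaluate at $\alpha=0$ with $\varpi_2\equiv 0$. Using the cancellation provided by $\partial_\alpha z_1(0)=0$ and the explicit kernels (the hyperbolic/trigonometric one from \eqref{IIIBSconf} with $l=\pi/2$ in the confined case, and the standard $|z(\alpha)-z(\beta)|^{-2}$ kernel in the unconfined one), I would reduce $\partial_t\partial_\alpha z_1(0)$ in each case to a single principal-value integral in $\beta$ that depends only on the initial profile $z_2$. To make these integrals tractable and to eliminate several terms, I would enforce the symmetries $z_1(-\alpha)=-z_1(\alpha)$ and $z_2(-\alpha)=z_2(\alpha)$ and choose $z_1,z_2$ built from explicit analytic ingredients (e.g.\ shifted arctangents or error-function-like bumps) that are flat at infinity.

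Next I would bound the two integrals rigorously. For each integral I would split $\RR$ into a small neighborhood $(-r,r)$ of the singularity, a middle window $[-R,-r]\cup[r,R]$, and two tails $(-\infty,-R)\cup(R,\infty)$. On the middle window I would use an adaptive interval-arithmetic quadrature with the C-XSC library, exactly as discussed in the introduction. On the tails I would use analytic decay bounds coming from the flatness of $z$ at infinity (polynomial decay of the integrand against exponentially decaying kernels in the confined case, polynomial decay in the unconfined case). Near the singularity I would carry out the principal-value subtraction algebraically so that the remaining integrand is bounded and analytic, and then enclose it via a Taylor model or by direct interval evaluation on $(-r,r)$. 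The target is rigorous enclosures
\[
\partial_t\partial_\alpha z_1^{\text{conf}}(0,0)\in(-\infty,0),\qquad \partial_t\partial_\alpha z_1^{\text{unc}}(0,0)\in(0,\infty),
\]
with strictly positive distance from zero, which by continuous dependence on parameters will automatically hold on a whole open family.

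The main obstacles I anticipate are twofold. The first is the design of a single profile $z$ for which the two integrals come out with opposite signs and with nontrivial margin: the confined and unconfined kernels agree to leading order near the diagonal, so the discrepancy in sign has to come from the medium-range behavior of the kernels, and some tuning of amplitude, support and the $z_2$-profile is needed to separate the two cases. The second is obtaining tight enough interval enclosures: the quantities of interest may be of moderate absolute size, so the subdivision of $[-R,R]$ and the treatment of the principal-value cancellation must be sharp enough to prove strict sign, not merely to observe it numerically. Once strict sign at $\alpha=0$ is proved, a short-time persistence argument in the spirit of \cite{ccfgl} upgrades the instantaneous computation to the existence of a finite $T>0$ such that the confined solution has turned before $t=T$ while the unconfined solution still has $\partial_\alpha z_1>0$, establishing the theorem.
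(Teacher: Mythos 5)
Your overall strategy is exactly the one the paper follows: reduce turning/not turning to the strict sign of $\partial_t\partial_\alpha z_1(0,0)$ with $\varpi_2=0$ in \eqref{IIIeqv2} and \eqref{IIIeq9}, exploit symmetry to collapse the derivative of the velocity at $\alpha=0$ to a single explicit integral for each kernel, validate the two signs by rigorous interval quadrature (with a Taylor-expansion treatment of the removable $0/0$ singularity at the origin), and then upgrade to a genuine theorem by approximating with analytic data and invoking local existence forward and backward in time. Two remarks on where you diverge. First, and this is a concrete error: you impose $z_1$ odd and $z_2$ \emph{even}. The turning framework requires a nondegenerate vertical tangent at $\alpha=0$, i.e.\ $\partial_\alpha z_1(0)=0$ together with $\partial_\alpha z_2(0)>0$; with $z_2$ even you get $\partial_\alpha z_2(0)=0$, so $\partial_\alpha z(0)=(0,0)$, the parametrization is degenerate at the very point you are studying (the arc-chord condition fails there and the local existence theorems do not apply), and the leading quantity you want to sign carries the prefactor $\partial_\alpha z_2(0)$ and would vanish. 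The paper takes \emph{both} $z_1$ and $z_2$ odd with $\partial_\alpha z_2(0)>0$; your construction needs the same fix. Second, a design point rather than a gap: the paper avoids your tail estimates entirely by choosing $z_2$ with compact support (identically zero for $|\alpha|\ge 2\pi/3$), so both integrands vanish outside a compact set and only the bounded window plus the origin need rigorous enclosure; your arctangent/erf-type profiles would work but force you to carry the decay bounds you describe, which the paper only needs for the later, genuinely noncompact theorems. With the parity corrected, your plan is a faithful reconstruction of the paper's proof.
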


This theorem also implies the following result:
\begin{coro}\label{water}
There exists a family of analytic curves $z(\al) = (z_1(\al),z_2(\al))$, flat at infinity, for which there exists a finite time $T$ such that the solution to the confined water waves problem develops a turning singularity before $t=T$ and the non confined does not.
\end{coro}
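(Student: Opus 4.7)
The plan is to exploit the structural identity between the first equation of the water waves system \eqref{ww}, namely $\pat z = BR(\varpi,z)z + c\paa z$, and the contour equation for Muskat: the two differ only in how $\varpi$ is determined. For Muskat, $\varpi$ is algebraically slaved to $z$ via \eqref{IIIeq5}; for water waves, $\varpi$ evolves by its own equation. Consequently, if at $t=0$ the two vorticities are engineered to coincide, the right-hand sides of the interface equations are identical at that instant, so $\pat z(\al,0)$, and in particular $\pat\paa z_1(\al,0)$, agree in both models.

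Concretely, I would take the analytic curve $z(\al)$ produced by Theorem \ref{ThmConfTurnsNoConfNoTurns}, which has a point of vertical tangent at some $\al^*$ and satisfies $\pat\paa z_1(\al^*,0)<0$ in the confined Muskat case and $\pat\paa z_1(\al^*,0)>0$ in the unconfined one. For the water waves problem I would prescribe $\varpi(\al,0)=-(\rho^2-\rho^1)\paa z_2(\al,0)$ and choose the tangential parameter $c(\al,0)$ to match the one implicit in the Muskat formulation, so that the rigorous interval-arithmetic bounds obtained for Theorem \ref{ThmConfTurnsNoConfNoTurns} apply verbatim to fix the sign of $\pat\paa z_1(\al^*,0)$ in the water waves evolution. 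Local well-posedness of water waves in the Rayleigh-Taylor stable regime (see e.g. \cite{Castro-Cordoba-Fefferman-Gancedo-GomezSerrano:finite-time-singularities-water-waves-surface-tension,Castro-Cordoba-Fefferman-Gancedo-GomezSerrano:finite-time-singularities-free-boundary-euler}) yields a $C^1$-in-time solution on a short interval around $t=0$; by continuity the sign of $\paa z_1(\al^*,t)$ for small $t>0$ is dictated by $\pat\paa z_1(\al^*,0)$, producing a turn in the confined case and preventing one in the unconfined case on short times.

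The main obstacle, and the reason the statement is packaged as a corollary rather than an independent theorem, is that for $t>0$ the two models genuinely diverge: the water waves vorticity obeys a nontrivial evolution and is no longer algebraically tied to $z$. However, the turning/no-turning dichotomy is an instantaneous statement at $t=0$, subsequently transported to positive times by continuity of the flow, so only matching the right-hand sides of the interface equations at the initial instant is required; no long-time control of the water waves vorticity is needed, and the computer-assisted verification carried out for Theorem \ref{ThmConfTurnsNoConfNoTurns} is reused without modification.
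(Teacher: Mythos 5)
Your proposal is correct and is essentially the paper's own argument: take the curve of Theorem \ref{ThmConfTurnsNoConfNoTurns}, prescribe the initial vorticity amplitude proportional to $\paa z_2(\al,0)$ so that the water waves interface velocity coincides with the Muskat one at $t=0$, and invoke local well-posedness plus continuity to transport the sign of $\pat\paa z_1(0,0)$ to short times (cf.\ \cite{ccfgl}). The only cosmetic difference is that you spell out the matching of the tangential term $c$, which is immaterial at the vertical-tangent point.
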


\begin{coment}
The shallowness parameter (see \cite{Bona-Lannes-Saut:asymptotic-internal-waves}) is defined as
$$
\mu\equiv \left(\frac{\text{\emph{typical} depth}}{\text{\emph{ typical } wavelength}}\right)^2.
$$
Waves with $\mu \ll 1$ are in the shallow water regime. In this example we have $\mu=\frac{1}{16}$, thus the shallow water regime is reached.
\end{coment}

\begin{defi}

We will say that an analytic initial condition $z(\al) = (z_1(\al),z_2(\al))$ \textbf{\emph{turns unconditionally}} if there exists a finite time $T > 0$ for which the solution to the Muskat problem with initial condition equal to $z(\al)$ develops a turning singularity before time $T$ independently of the permeability parameter $\mathcal{K}$. Analogously, we will say that it \textbf{\emph{recoils unconditionally}} if there exists a finite time $T > 0$ for which the solution to the Muskat problem with initial condition equal to $z(\al)$ does not develop a turning singularity before time $T$ independently of the permeability parameter $\mathcal{K}$. If the curve $z(\al,t) = (z_1(\al),z_2(\al))$ does not satisfy any of the two conditions mentioned before, we will say that the initial condition \textbf{\emph{turns conditionally}}.

\end{defi}

\begin{coment}
These behaviours are \emph{local in time}, thus, they refer to times $0\leq t \leq T$. In other words, for some initial data that \emph{turns unconditionally}, there may exist some $T_2>T$ such that for $t>T_2$ the interface can be parametrized as a graph. The converse is also true: there may exist initial data such that they become smooth graphs for $0<t<T$ and $T_2>T$ such that 
$$
\limsup_{t\rightarrow T_2}\|\pax f(t)\|_{L^\infty}=\infty.
$$
\end{coment}

\begin{defi}We will say that for a given analytic initial condition $z(\al) = (z_1(\al),z_2(\al))$, the permeabilities \textbf{\emph{help}} the formation of singularities if the curve turns conditionally and for $\mathcal{K} = 0$ there exists a time $T > 0$ such that it does not develop a turning singularity before time $T$. Analogously, we will say that they \textbf{\emph{prevent}} the formation of singularities if the curve turns conditionally and for $\mathcal{K} = 0$ there exists a time $T > 0$ such that it develops a turning singularity before time $T$.
\end{defi}

For the unconfined, inhomogeneous Muskat problem we have
\begin{teo}
There exist 3 different families of analytic curves $z(\al) = (z_1(\al),z_2(\al))$, periodic in the horizontal variable such that the corresponding solution to the unconfined, inhomogeneous Muskat \eqref{IIIeq13}:
\begin{itemize}
\item[(a)] They turn unconditionally.
\item[(b)] The permeabilities help the formation of singularities.
\item[(c)] The permeabilities prevent the formation of singularities.
\end{itemize}
\label{ThmDepKappaIndepKappaPeriodic}
\end{teo}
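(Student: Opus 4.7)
The plan is to reduce each of the three regimes to a sign condition on
$$\Phi(\mathcal{K}) \;:=\; \partial_t\partial_\alpha z_1(\alpha_0,0),$$
where $\alpha_0$ is a point at which the chosen initial curve $z(\alpha)$ has a vertical tangent, i.e.\ $\partial_\alpha z_1(\alpha_0,0)=0$. By the local well-posedness of \eqref{IIIeq13} for arbitrary, possibly non-graph, analytic curves (see \cite{Berselli-Cordoba-GraneroBelinchon:local-solvability-singularities-muskat}), the solution exists on a two-sided time interval around $t=0$, so if $\Phi(\mathcal{K})<0$ then $\partial_\alpha z_1(\alpha_0,t)$ is positive for $-\epsilon<t<0$ and negative for $0<t<\epsilon$, which is exactly a turning singularity; if $\Phi(\mathcal{K})>0$ the curve recoils on a short time interval. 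A crucial structural observation is that $\Phi$ is affine in $\mathcal{K}$: differentiating \eqref{IIIeq13} in $\alpha$ and evaluating at $\alpha_0$ eliminates the reparametrization term $c\,\partial_\alpha z$, and the only $\mathcal{K}$-dependence comes from the explicit factor $\mathcal{K}$ in $\varpi_2$ via \eqref{IIIeq14A}, so
$$\Phi(\mathcal{K}) \;=\; a(z) \;+\; \mathcal{K}\,b(z),$$
with $a(z)$ the $\bar\rho$-contribution from the first line of \eqref{IIIeq13} and $b(z)$ the (rescaled) contribution of the two $\varpi_2$-dependent integrals.

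The three target behaviours now translate into explicit inequalities on the pair of real numbers $(a(z),b(z))$: for (a) \emph{turn unconditionally} we need $\Phi(\mathcal{K})<0$ for every $\mathcal{K}\in(-1,1)$, i.e.\ $a+|b|<0$; for (b) \emph{help} we need $a>0$ together with $a+\mathcal{K}b<0$ for some $\mathcal{K}\in(-1,1)$, i.e.\ $a>0$ and $|b|>a$ with $b$ of the appropriate sign; for (c) \emph{prevent} we need $a<0$ and $a+\mathcal{K}b>0$ for some admissible $\mathcal{K}$, i.e.\ $a<0$ and $|b|>|a|$. I would construct each family by proposing a low-degree, explicit trigonometric-polynomial profile $z(\alpha)$: for (a), a narrow, deep bubble lying far above the jump line so that the homogeneous kernel dominates and $a$ is strongly negative while $b$ is comparatively small; for (b), a shallow profile whose mass is placed close to the permeability line $y=-h_2$ and tilted so that the homogeneous contribution $a$ is positive but the inhomogeneous correction $b$ is large enough with the correct sign; and for (c), essentially the mirror construction. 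Because all the inequalities required on $(a,b)$ are open, continuous dependence of $(a,b)$ on the profile yields not just a single example but a full family (e.g.\ parametrized by amplitude or by a deformation parameter) with the same certified behaviour.

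The core of the work, and the main obstacle, is the rigorous enclosure of $a(z)$ and $b(z)$, which are principal-value integrals over $\mathbb{T}$ with cancellations at $\beta=\alpha_0$. I would first symmetrize using the logarithmic-derivative identity already invoked before \eqref{IIIeqv2} so that the integrand is absolutely convergent and vanishes quadratically at the singularity; I would then split $\mathbb{T}$ into a small neighbourhood of $\alpha_0$, treated by a rigorous Taylor expansion of the regularized integrand with an explicit remainder enclosure, and its complement, treated by a verified adaptive Gauss--Legendre rule as in \cite{Kramer-Wedner:adaptive-gauss-legendre-verified-computation,Lang:multidimensional-verified-gaussian-quadrature} using the C-XSC library \cite{CXSC}. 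The delicate point, especially for (b) and (c), is that the proof rests on a certified near-cancellation $a+\mathcal{K}b\approx 0$: one therefore needs tight enough enclosures of $a$ and $b$ that both the strict sign of $a$ at $\mathcal{K}=0$ and the reversed sign of $a+\mathcal{K}_0 b$ for some $\mathcal{K}_0\in(-1,1)$ are simultaneously certified by interval arithmetic, and this tightness is what dictates the computational parameters (subdivision size and quadrature order) in the accompanying code.
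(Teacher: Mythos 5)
Your proposal follows essentially the same route as the paper: reduce turning/recoiling to the sign of $\partial_t\partial_\alpha z_1$ at the vertical-tangent point, exploit that this quantity is affine in $\mathcal{K}$ so it splits into a homogeneous part plus a $\mathcal{K}$-linear inhomogeneous part (the paper's $I^B_1$ and $I^B_2$ are your $a$ and $\mathcal{K}b$), choose explicit trigonometric profiles realizing each sign pattern for (a), (b), (c), and certify the signs by interval-arithmetic quadrature with a Taylor treatment of the $0/0$ indeterminacy near the singular point. This is exactly the paper's argument, down to the use of the affine dependence on $\mathcal{K}$ to get the threshold value $K^{*}$.
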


\begin{teo}
There exist 3 different families of analytic curves $z(\al) = (z_1(\al),z_2(\al))$, flat at infinity such that the corresponding solution to the unconfined, inhomogeneous Muskat \eqref{IIIeq9}:
\begin{itemize}
\item[(a)] They turn unconditionally.
\item[(b)] The permeabilities help the formation of singularities.
\item[(c)] The permeabilities prevent the formation of singularities.
\end{itemize}
\label{ThmDepKappaIndepKappaFlat}
\end{teo}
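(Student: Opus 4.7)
My plan is to follow the same strategy as for the periodic Theorem \ref{ThmDepKappaIndepKappaPeriodic}, adapted to the flat-at-infinity setting governed by \eqref{IIIeq9}-\eqref{IIIeq10A}. The turning criterion reduces to analyzing the sign of $\pat \paa z_1(\alpha_0,0)$ at a point $\alpha_0$ where the initial interface has a vertical tangent: a strictly negative sign forces $\paa z_1$ to become negative for short positive time (turning), while a nonnegative sign prevents it. The crucial simplification in the unconfined case is that formula \eqref{IIIeq10A} shows $\varpi_2$ is \emph{linear} in $\mathcal{K}$, unlike the confined case where $\varpi_2$ solves a Fredholm equation and depends on $\mathcal{K}$ through a Neumann series. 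Substituting into \eqref{IIIeq9}, one obtains
\begin{equation*}
\pat \paa z_1(\alpha_0,0) = A(\alpha_0) + \mathcal{K}\,B(\alpha_0),
\end{equation*}
where $A$ and $B$ are functionals of the initial curve alone. Since $\mathcal{K}\in(-1,1)$, the three cases are distinguished by: (a) $\max\{A+B, A-B\}<0$; (b) $A\geq 0$ and $\min\{A+B, A-B\}<0$; (c) $A<0$ and $\max\{A+B, A-B\}\geq 0$.

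I would then exhibit three explicit analytic, flat-at-infinity initial curves, each with a vertical tangent at some distinguished $\alpha_0$, tuned so that the pair $(A(\alpha_0),B(\alpha_0))$ lies in the corresponding region of parameter space. A natural choice is a bump-type profile analogous to those used in the proof of Theorem \ref{ThmDepKappaIndepKappaPeriodic}, with explicit algebraic or Schwartz decay at infinity. For (a) I would pick a profile making $A$ strongly negative and $|B|$ comparatively small; for (b) I would perturb around a configuration with $A$ slightly positive and arrange $B$ sufficiently negative; and for (c) the mirror-image perturbation, with $A$ slightly negative and a large $|B|$ of the appropriate sign. The height $h_2$ can be chosen as an additional free parameter to drive $(A,B)$ into the required quadrant.

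The main obstacle, as in the preceding theorems, is the rigorous evaluation of $A(\alpha_0)$ and $B(\alpha_0)$ in interval arithmetic using the C-XSC library. Unlike the periodic case, the integrals are now over all of $\RR$, so I would split each one into a bounded part $|\beta-\alpha_0|\leq R$, handled by rigorous Gauss-Legendre-type quadrature after Taylor-expanding the integrand at $\beta=\alpha_0$ to cancel the PV singularity, and a tail $|\beta-\alpha_0|>R$, bounded analytically using the prescribed decay of the profile toward its asymptotic flat state (and the consequent decay of the Biot-Savart kernel along the curve). Additionally, the velocity integrand in \eqref{IIIeq9} depends on $\varpi_2$, which is itself given by the integral \eqref{IIIeq10A}, so I would use a nested two-stage enclosure: first produce a rigorous piecewise enclosure of $\varpi_2(\beta)$ on $[-R,R]$ together with an analytic tail bound, then insert it into the outer velocity integrand. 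Balancing the quadrature refinement against the truncation radius $R$ and the interval wrapping effect is expected to be the principal technical challenge, and for case (a) one additionally has to keep $|B|$'s interval enclosure small enough that $A\pm B$ both remain strictly on the correct side of zero.
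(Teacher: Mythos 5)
Your proposal follows essentially the same route as the paper: reduce turning to the sign of $\paa v_1(0)$ at the vertical tangent, observe via \eqref{IIIeq10A} that this quantity is affine in $\mathcal{K}$ (so that exhibiting the three behaviours, and the existence and uniqueness of $K^*$, comes down to the signs of the $\mathcal{K}$-independent part $I^C_1$ and the $\mathcal{K}$-linear part $I^C_2$), choose explicit compactly-supported-plus-flat profiles, and validate the signs by interval arithmetic with a singular/bounded/tail splitting. The only notable difference is computational: the paper substitutes \eqref{omegaflat} directly into \eqref{velocityflat} and evaluates the resulting two-dimensional integral (which is regular since $|z_2|<h_2$) by an adaptive 2D Simpson rule, bounding the $\beta$-tail through monotonicity of the inner integral $G(\beta)$ for $\beta\geq\|z_1\|_{L^\infty}$, rather than your nested two-stage enclosure of $\varpi_2$, which is workable but loses the functional dependence on $\beta$ and would suffer more from interval wrapping.
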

Moreover, for both Theorems, in cases $(b)$ and $(c)$, there exists a unique parameter $K^{*}$ such that for all $\mathcal{K} < K^{*}$ the curve exhibits one behaviour and for all $\mathcal{K} > K^{*}$ it exhibits the other.

\begin{coment}
We should remark that Theorems \ref{ThmDepKappaIndepKappaPeriodic} and \ref{ThmDepKappaIndepKappaFlat} are more general than the ones in \cite[Theorems 3 and 4]{Berselli-Cordoba-GraneroBelinchon:local-solvability-singularities-muskat} since we are suppressing any smallness assumption in $|\mathcal{K}|$ or largeness in $h_2$.
\end{coment}

Finally, regarding the confined, inhomogeneous problem, we prove:

\begin{teo}
\label{ThmConfinedInhomogeneous}
There exists a family of analytic initial data $z(\al,h_2) = (z_1(\al,h_2),z_2(\al,h_2))$, depending on the height at which the permeability jump is located, such that the corresponding solution to the confined, inhomogeneous Muskat \eqref{IIIeqv2}:
\begin{enumerate}[(a)]
\item 
\begin{enumerate}[1.]
\item For all $0.25 < h_2 < h_2^{ntu} = 0.648$, the curve recoils unconditionally.
\item For all $ 0.676 < h_2 < 0.686$, the permeabilities help the formation of singularities.
\item For all $ 0.715 < h_2 <  0.738$, the permeabilities prevent the formation of singularities.
\item For all $ 0.77 = h_2^{tu} < h_2 < 1.25$, the curve turns unconditionally.
\end{enumerate}
\item There exists a $C^1$ curve $(h_2,\mathcal{K}(h_2))$, located in $[0.648,0.77] \times (-1,1)$, such that for every $h_2$ for which the curve is defined, for every $\mathcal{K}<\mathcal{K}(h_2)$ the curve does not turn and for every $\mathcal{K}>\mathcal{K}(h_2)$ the curve turns.
\end{enumerate}
\end{teo}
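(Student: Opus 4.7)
The plan is to reduce the turning/recoiling dichotomy at short time to a sign condition on a single function of $(h_2,\mathcal{K})$, verify that condition rigorously with interval arithmetic on a parameter grid, and then apply an implicit function argument for part (b). To this end I pick a parametrized family of analytic initial curves $z(\al,h_2) = (z_1(\al,h_2),z_2(\al,h_2))$, lying strictly above $\{y=-h_2\}$ and flat at infinity, with a distinguished point $\al_0$ where $\paa z_1(\al_0,h_2) = 0$ (the vertical tangent). By the local well-posedness proved in \cite{Berselli-Cordoba-GraneroBelinchon:local-solvability-singularities-muskat} the solution of \eqref{IIIeqv2} is analytic in $(\al,t)$ for short time, so the sign of $\paa z_1(\al_0,t)$ for $t$ small coincides with the sign of
$$
F(h_2,\mathcal{K}) := \pat \paa z_1(\al_0,0),
$$
obtained by differentiating \eqref{IIIeqv2} in $\al$ at $\al_0$ and substituting \eqref{IIIeq5} and \eqref{IIIw2defb} for $\varpi_1$ and $\varpi_2$. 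If $F>0$ the curve recoils and if $F<0$ it turns, so the notions of \emph{turns/recoils unconditionally}, \emph{help}, and \emph{prevent} translate into corresponding inequalities for $F$ over sub-rectangles of $(h_2,\mathcal{K})$-space.

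For part (a) I would produce tight enclosures of $F$ on each of the four regimes using the C-XSC library. Each term of $F$ is an iterated principal-value integral whose kernels (rational in $\sinh,\cosh,\sin,\cos$ of $z(\al_0)-z(\beta)\pm h_2$) depend analytically on $(h_2,\mathcal{K})$; the principal values are handled by analytic subtraction of the odd singularity, splitting each integral into a regular near-diagonal piece treated by verified quadrature and a tail bounded by the exponential decay of the hyperbolic kernels. The convolution with the Schwartz function $G_{h_2,\mathcal{K}}$ is treated the same way: $G_{h_2,\mathcal{K}}(\xi)$ is tabulated rigorously on a fine $\xi$-grid from its defining integral, exploiting the tail bound $|\sinh(\pi y)+\mathcal{K}\sinh(2h_2 y)|^{-1} \leq C\, e^{-(\pi-2h_2)|y|}/(1-|\mathcal{K}|)$, valid for $0<h_2<\pi/2$ and $|\mathcal{K}|<1$. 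With this machinery the four conditions become: $F>0$ on $[0.25,0.648]\times[-1,1]$ (regime $1$); $F<0$ on $[0.77,1.25]\times[-1,1]$ (regime $4$); $F(h_2,0)>0$ together with the existence of some $\mathcal{K}_0$ with $F(h_2,\mathcal{K}_0)<0$, uniformly in $h_2\in[0.676,0.686]$ (regime $2$); and symmetrically for regime $3$.

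For part (b), since all integrands are real-analytic in the parameters and uniformly dominated by integrable functions, $F$ is real-analytic on $[0.648,0.77]\times(-1,1)$. Part (a) already gives $F(0.648,\mathcal{K})>0$ and $F(0.77,\mathcal{K})<0$ for every $\mathcal{K}\in(-1,1)$, so by continuity each $h_2$ in that strip admits at least one zero $\mathcal{K}^*(h_2)$. To upgrade this to a unique $C^1$ curve I would verify with interval arithmetic that $\pa_{\mathcal{K}}F$ has a definite sign (expected negative) throughout $[0.648,0.77]\times(-1,1)$; the implicit function theorem then yields a single real-analytic $\mathcal{K}^*:[0.648,0.77]\to(-1,1)$ with the stated monotonic bifurcation behavior. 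The main obstacle is the computer-assisted step: $F$ is a nested object (principal-value integrals whose integrand contains $\varpi_2$, itself a convolution with the parameter-dependent integral $G_{h_2,\mathcal{K}}$), and the enclosures must be sharp enough to resolve sign changes on the narrow strip $[0.648,0.77]$ and to bound $\pa_{\mathcal{K}}F$ away from zero; taming the interval dependency blow-up across so many compositions and designing an efficient adaptive subdivision of $(h_2,\mathcal{K})$-space is where the bulk of the effort will go.
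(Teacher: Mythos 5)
Your overall architecture is the one the paper uses: reduce turning/recoiling to the sign of $F(h_2,\mathcal{K})=\pat\paa z_1(0,0)$ for a one-parameter family of odd curves scaled to stay above $y=-h_2$, verify the sign with interval arithmetic over rectangles in $(h_2,\mathcal{K})$-space, and obtain the bifurcation curve in part (b) from the implicit function theorem after checking that $\partial_{\mathcal{K}}F$ has a definite sign. The translation of \emph{help}/\emph{prevent} into sign conditions at $\mathcal{K}=0$ plus a sign change in $\mathcal{K}$ is also correct.

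The genuine gap is in how you propose to evaluate $F$. Written directly from \eqref{IIIeqv2} and \eqref{IIIw2defb}, the $\varpi_2$-contribution to $F$ is a four-fold nested object: an integral in $\beta$ of $\varpi_2(-\beta)$ against a kernel, where $\varpi_2$ contains a convolution in $\xi$ with $G_{h_2,\mathcal{K}}$, which is itself defined by an integral in $y$, applied to a principal-value integral in $\gamma$. Your plan — tabulate $G_{h_2,\mathcal{K}}$ rigorously on a $\xi$-grid and run verified quadrature through all the nestings — is mathematically legitimate but almost certainly not executable with enclosures tight enough to resolve the sign on the narrow strip $h_2\in[0.648,0.77]$: interval dependency widens the result at every level of nesting, and the parameters $(h_2,\mathcal{K})$ enter every kernel, so $G$ must be re-enclosed for each parameter rectangle. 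The paper's key step, which your proposal omits, is an analytic dimension reduction: using residue computations (Lemmas \ref{IIIlem1}--\ref{IIIlem3} and identities \eqref{IIIa}--\eqref{eqaux2}) together with Fubini, the $\beta$- and $\xi$-integrations are carried out in closed form, collapsing $I_2$ to a single explicit two-dimensional integral in $(\gamma,y)$ with kernel $\bigl(\sinh(\pi y)+\mathcal{K}\sinh(2h_2y)\bigr)^{-1}$ times elementary hyperbolic factors. Even that reduced 2D computation required adaptive Gauss--Legendre quadrature with geometric subdivision and about $30$ hours on each of $8$ cores; without the reduction the computer-assisted step is the proof, and your proposal does not supply a workable version of it.

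Two smaller points. First, in part (b) you assert that the endpoint signs $F(0.648,\cdot)>0$ and $F(0.77,\cdot)<0$ force a zero of $F(h_2,\cdot)$ for \emph{every} intermediate $h_2$; this does not follow (sign information at the two extreme values of $h_2$ says nothing about a fixed intermediate $h_2$ as $\mathcal{K}$ varies), and the theorem is deliberately phrased ``for every $h_2$ for which the curve is defined'' to avoid this claim. The paper only verifies $\partial_{\mathcal{K}}F\neq0$ on a neighbourhood of the region where $F$ could vanish (the ``unknown'' rectangles), which suffices and is cheaper than your proposed verification on the whole strip. Second, your tail bound $|\sinh(\pi y)+\mathcal{K}\sinh(2h_2y)|^{-1}\leq Ce^{-(\pi-2h_2)|y|}/(1-|\mathcal{K}|)$ is valid but degenerates as $|\mathcal{K}|\to1$; since the diagram is computed up to $\mathcal{K}=\pm1$, you need the sharper case-split used in \eqref{bichaco}, whose constant stays bounded for $\mathcal{K}\geq0$ and only requires $M$ large for $\mathcal{K}<0$.
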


\section{Proof of the Theorems}
\label{SectionProofs}
The idea of these proofs is to transform the problem on the turning or not into finding a sign of a given quantity ($\da v_1(0,0)$). This sign will be validated using interval arithmetics. First, we consider curves such that $\paa z_1(0,0)=0$ and define $\displaystyle m(t)=\min_\alpha \paa z_1(\alpha,t)$. We will assume that $m(0)=\paa z_1(0,0)=0$ holds, and this minimum is only attained at $\al = 0$. Now, if $\paa v_1(0,0)=\partial_\alpha\pat z_1(0,0)>0$ then we get $\frac{d}{dt}m(t)>0$ for $t>0$ small enough. This implies $m(\delta)>0$ for a small enough $\delta>0$ and the curve can be parametrized as a graph. Indeed, we compute
$$
m(\delta)=m(0)+\int_0^ \delta \frac{d}{dt}m(s)ds=\int_0^ \delta \pat\paa z_1(\alpha_s,s)ds>0,
$$
where $\al_s$ is a point where the minimum is attained. If $\paa v_1(0,0)=\partial_\alpha\pat z_1(0,0)<0$, then $m(t)<0$ if $t$ is small enough and the curve can not be parametrized as a graph. After this goal is achieved, we approximate our initial data with analytic curves with the same properties (for instance by convolving it with the heat kernel). All these analytic curves that approximate our explicit constructed example satisfy the same symmetry hypotheses (see below). For these approximating curves, we apply the local existence forward and backward in time theorems proved in \cite{Berselli-Cordoba-GraneroBelinchon:local-solvability-singularities-muskat, ccfgl, Cordoba-GraneroBelinchon-Orive:confined-muskat}.

\subsection{The homogeneous case}\label{SubsectionProofThm1}
In this section we prove that the boundaries make the Muskat problem more singular from the point of view of singularity formation. Equivalently, the boundaries decrease the diffusion rate (see \cite{Cordoba-GraneroBelinchon-Orive:confined-muskat}).
\begin{proof}[Proof of Theorem \ref{ThmConfTurnsNoConfNoTurns}]
We take $l=\pi/2$, $\bar{\rho} = 1$.
We consider curves $z(\alpha)=(z_1(\alpha),z_2(\alpha))$ such that:
\begin{enumerate}
 \item $z_i$ are analytic, odd functions.
 \item $\partial_\alpha z_1(\alpha)>0, \forall \alpha\neq 0$, $\partial_\alpha z_1(0)=0$, and $\partial_\alpha z_2(0)>0$.
\end{enumerate}
We want to show that $\partial_\alpha v_1(0,0)=\partial_\alpha\pat z_1(0,0)<0$. The equation for this regime is
\begin{multline*}
\pat z(\alpha)=\text{P.V.}\int_\RR\frac{(\paa z(\alpha)-\paa z(\alpha-\beta))\sinh(z_1(\alpha)-z_1(\alpha-\beta))}{\cosh(z_1(\alpha)-z_1(\alpha-\beta))-\cos(z_2(\alpha)-z_2(\alpha-\beta))}d\beta\\
+\text{P.V.}\int_\RR\frac{(\paa z_1(\alpha)-\paa z_1(\alpha-\beta),\paa z_2(\alpha)+\paa z_2(\alpha-\beta))\sinh(z_1(\alpha)-z_1(\alpha-\beta))}{\cosh(z_1(\alpha)-z_1(\alpha-\beta))+\cos(z_2(\alpha)+z_2(\alpha-\beta))}d\beta.
\end{multline*}
Taking one derivative we get
$$
\paa \pat z_1(\alpha)=I_1(\alpha)+I_2(\alpha)+I_3(\alpha),
$$
where
$$
I_1(0)=\text{P.V.}\int_\RR\left(\frac{-\paa^2 z_1(-\beta)\sinh(-z_1(-\beta))}{\cosh(-z_1(-\beta))-\cos(-z_2(-\beta))}+\frac{-\paa^2 z_1(-\beta)\sinh(-z_1(-\beta))}{\cosh(-z_1(-\beta))+\cos(z_2(-\beta))}\right)d\beta,
$$
$$
I_2(0)=\text{P.V.}\int_\RR\left(\frac{\cosh(-z_1(-\beta))\left(-\paa z_1(-\beta)\right)^2}{\cosh(-z_1(-\beta))-\cos(-z_2(-\beta))}+\frac{\left(-\paa z_1(-\beta)\right)^2\cosh(-z_1(-\beta))}{\cosh(-z_1(-\beta))+\cos(z_2(-\beta))}\right)d\beta,
$$
and
\begin{multline*}
I_3(0)=-\text{P.V.}\int_\RR\frac{\left[\sinh(-z_1(-\beta))\left(-\paa z_1(-\beta)\right)\right]^2}{\left(\cosh(-z_1(-\beta))-\cos(-z_2(-\beta))\right)^2}d\beta\\
-\text{P.V.}\int_\RR\frac{(-\paa z(-\beta))\sinh(-z_1(-\beta))\left[\sin(-z_2(-\beta))\left(-\paa z_2(-\beta)\right)\right]}{\left(\cosh(-z_1(-\beta))-\cos(-z_2(-\beta))\right)^2}d\beta\\
+\text{P.V.}\int_\RR\frac{\left[-\paa z_1(-\beta)\sinh(-z_1(-\beta))\right]^2}{\left(\cosh(-z_1(-\beta))+\cos(z_2(-\beta))\right)^2}d\beta\\
+\text{P.V.}\int_\RR\frac{-\paa z_1(-\beta)\sinh(-z_1(-\beta))\left[-\sin(z_2(-\beta))\paa z_2(-\beta)\right]}{\left(\cosh(-z_1(-\beta))+\cos(z_2(-\beta))\right)^2}d\beta.
\end{multline*}
Then, after some integration by parts and using the properties of $z_i$, we get the following expression for the derivative of the velocity in the confined case:
\begin{multline*}
I^A_{neg}\equiv\frac{\partial_\alpha v_1(0)}{2}=\partial_{\alpha} z_2(0)\int_0^\infty \partial_\alpha z_1(\eta)\sinh(z_1(\eta))\sin(z_2(\eta))\bigg{(}\frac{1}{(\cosh(z_1(\eta))-\cos(z_2(\eta)))^2}\\
+\frac{1}{(\cosh(z_1(\eta))+\cos(z_2(\eta)))^2}\bigg{)}d\eta.
\end{multline*}
With the same approach, for the unconfined case the expression is
$$
I^A_{pos}\equiv\frac{\partial_\alpha v_1(0)}{8}=\partial_{\alpha} z_2(0)\int_0^\infty \frac{\partial_\alpha z_1(\eta)z_1(\eta)z_2(\eta)}{(z_1(\eta))^2+(z_2(\eta))^2)^2}d\eta.
$$

Thus, we are left to validate the following signs:
\begin{align}
\label{signconfnoconf}
I^A_{neg}<0,\quad I^A_{pos}>0.
\end{align}

\begin{figure}[h!]\centering
\includegraphics[scale=0.35]{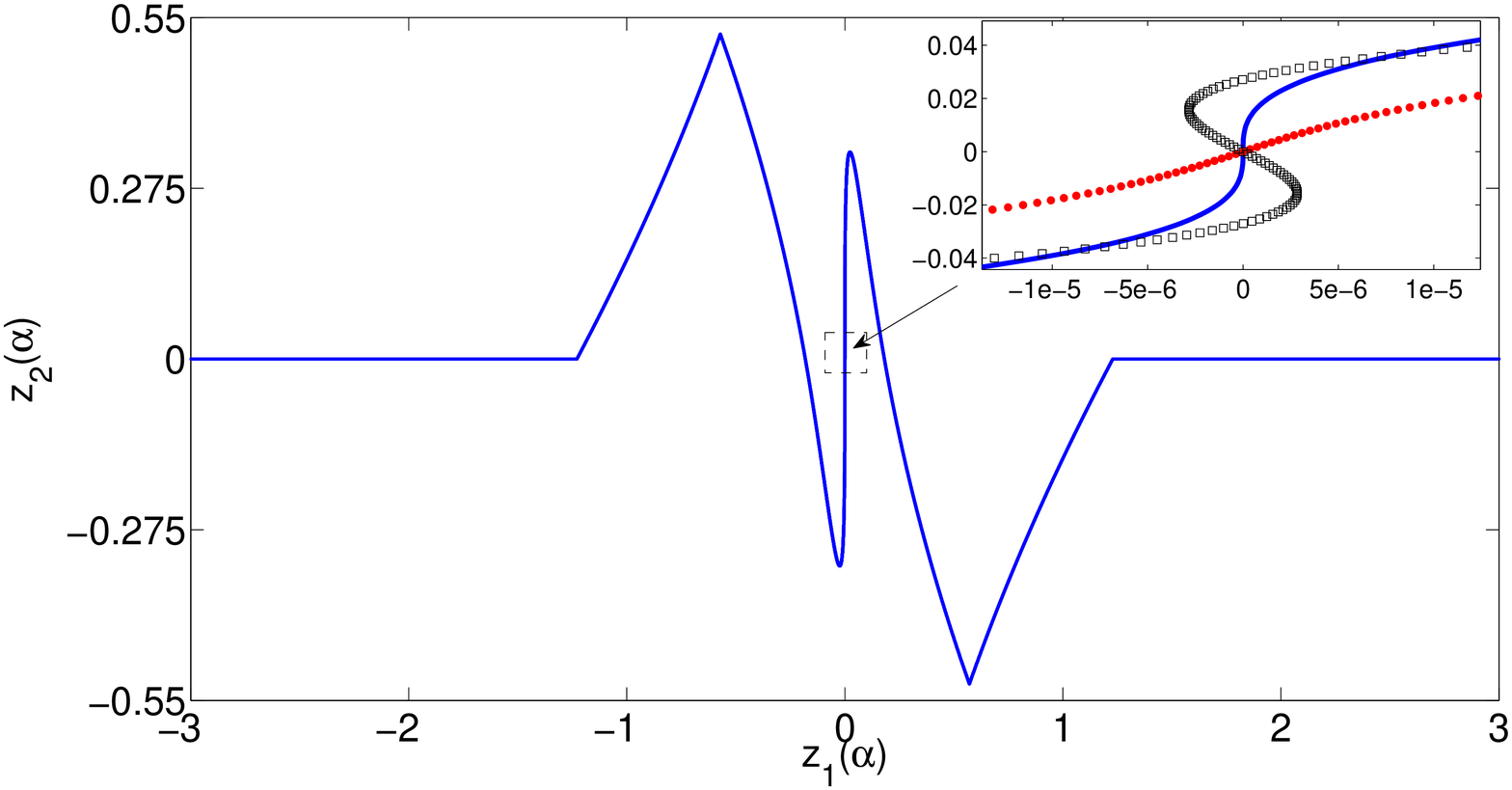}
\caption{The curve in Theorem \ref{ThmConfTurnsNoConfNoTurns}. Inset: Close caption around zero, solid: initial condition, dotted: normal component of the velocity for the infinitely deep case, squared: normal component of the velocity for the finitely deep case. The normal components have been scaled by a factor $1/100$.}
\label{figteo1}
\end{figure}

We rigorously validate them for the following data (see Figure \ref{figteo1}):

\begin{align*}
z_1(\al) & = \al - \sin(\al)e^{-K\al^{2}}, \quad K = 10^{-4} \\
z_2(\al) & = \left\{
\begin{array}{lr}
\displaystyle \frac{\sin(3\al)}{3} & \displaystyle \text{ if } 0 \leq \al \leq \frac{\pi}{3} \vspace{0.2cm} \\
\displaystyle - \al + \frac{\pi}{3} & \displaystyle \text{ if } \frac{\pi}{3} \leq \al \leq \frac{\pi}{2} \vspace{0.2cm} \\
\displaystyle \al - \frac{2\pi}{3} & \displaystyle \text{ if } \frac{\pi}{2} \leq \al \leq \frac{2\pi}{3}\vspace{0.2cm} \\
\displaystyle 0 & \displaystyle \text{ if } \frac{2\pi}{3} \leq \al, \\
\end{array}
\right.
\end{align*}
where $z_2$ is extended such that it is an odd function. This corresponds to the numerical scenario given in \cite{Cordoba-GraneroBelinchon-Orive:confined-muskat}. A first attempt is to compute the normal velocity of the curve in a nonrigorous way using the integral representations in \eqref{IIIeqv2} and \eqref{IIIeq9} and the trapezoidal rule with an equispaced grid for several points around the point with vertical tangent. In Figure \ref{figteo1} (inset), we plot it for the two scenarios (confined and non confined), both scaled by a factor $1/100$. We can observe that the velocity denoted by squares, which corresponds to the confined case, will make the curve develop a turning singularity, where the dotted one (non-confined case) will force the curve to stay in the stable regime.

In order to validate the sign, we split each of $I_{pos}^{A}$ and $I_{neg}^{A}$ into three pieces, each corresponding to a different piece of the piecewise defined $z_2$ in which $z_2(\al)$ is not identically 0.

In the second and third pieces, the integrand is analytic and we can apply Simpson's rule on a uniform (equispaced) mesh $\eta_{0} < \eta_{1} < \ldots < \eta_{N+1}$ for the computation of the integrals:

\begin{align*}
\int_{a}^{b} f(\eta) d\eta \in \sum_{i=0}^{N} \int_{\eta_{i}}^{\eta_{i+1}}f(\eta)d\eta & = \sum_{i=0}^{N} \frac{(\eta_{i+1}-\eta_{i})}{6}\left(f(\eta_{i})+f(\eta_{i+1})+4f\left(\frac{\eta_{i}+\eta_{i+1}}{2}\right)\right)\\
& -\frac{1}{2880}(\eta_{i+1}-\eta_{i})^{5}f^{4}([\eta_{i},\eta_{i+1}]).
\end{align*}

The first piece needs special care since the integrand is of type $\frac{0}{0}$ when $\alpha$ goes to zero. We should remark that the function is integrable: the numerator is $O(\alpha^{6})$ and the denominator is $O(\alpha^{4})$ when expanded both around $\alpha = 0$ in the two problematic cases, namely $I^{A}_{pos}$ and the first summand of $I^{A}_{neg}$. We further split the integral into two pieces, one ranging from 0 to $\ep$ and another from $\ep$ to $\frac{\pi}{3}$. In the validation of the theorem, the choice of the constant $\ep$ equal to $\frac{1}{128}$ was enough. The integrand of the second piece is analytic and is calculated as before, while for the first piece we expand both the numerator and the denominator and cancel out the extra factors $\alpha$. In our case this means (for $I_{pos}^{A}$):

\begin{align*}
\int_{0}^{\ep}\frac{\da z_1(\alpha)\sin(z_1(\alpha))\sinh(z_2(\alpha))}{(\cosh(z_2(\alpha))-\cos(z_1(\alpha)))^2}d\alpha
\equiv \int_{0}^{\ep}\frac{\mathcal{N}(\al)}{\mathcal{D}(\al)}d\alpha \in \int_{0}^{\ep} \frac{\sum_{i=0}^{5}a_{i}\alpha^{i} + \frac{1}{6!}\da^{6} \mathcal{N}([0,\ep])\alpha^{6}}{\sum_{j=0}^{3}b_j\alpha^{j} + \frac{1}{4!}\da^{4}\mathcal{D}([0,\ep])\alpha^{4}}d\alpha.
\end{align*}

Since $a_0, \ldots, a_5, b_0, \ldots, b_3$ are zero, we get

\begin{align}\label{pax4D}
\int_{0}^{\ep}\frac{\mathcal{N}(\al)}{\mathcal{D}(\al)}d\alpha \in \int_{0}^{\ep} \frac{4!}{6!}\frac{\al^2\da^{6} \mathcal{N}([0,\ep])}{ \da^{4}\mathcal{D}([0,\ep])}d\alpha
\subset \frac{\ep^{3}}{3}\frac{1}{30}\frac{\da^{6} \mathcal{N}([0,\ep])}{ \da^{4}\mathcal{D}([0,\ep])}.
\end{align}

The code is flexible so that $N$ can be specified by the user of the program. One can see that for small values of $N$, the intervals in which the value of $I^{A}_{pos}, I^{A}_{neg}$ are enclosed are not small enough such that 0 does not belong to them, needing further precision. However, for $N = 8192$ the grid is fine enough to check conditions \eqref{signconfnoconf}. The calculations for $N = 8192$ can be found in Table \ref{tabla1}.
\begin{table}[h]
    \begin{center}
        \begin{tabular}[c]{|c|c|}
            \hline
            Quantity & Enclosure \\
            \hline
            $I^{A}_{pos}$ & $ 0.0212172^{1922}_{7301}$ \\
            \hline
            $I^{A}_{neg}$ & $-0.0136819^{7345}_{1981}$ \\
            \hline
        \end{tabular}
        \caption{Results of Theorem \ref{ThmConfTurnsNoConfNoTurns}.}
        \label{tabla1}
    \end{center}
\end{table}



The computation took 2.96 seconds on an Intel i5 processor with 4 GB of RAM. Choosing as initial data a sufficiently close analytic perturbation of $z_2$ finishes the theorem.

\end{proof}

Corollary \ref{water} follows:
\begin{proof}
Take the same curve as before and define the initial amplitude for the vorticity as $\paa z_2(\alpha,0)$. With these initial data we have a solution $(z,\varpi)$ of the water waves problem \eqref{ww} and we obtain the result (see \cite{ccfgl} for more details).
\end{proof}

\subsection{The inhomogeneous, unconfined case}\label{SubsectionProofsThm23}
In this section we prove the existence of turning waves for a physical parameter region bigger than the one in \cite{Berselli-Cordoba-GraneroBelinchon:local-solvability-singularities-muskat}. In both proofs, we consider curves $z(\alpha)=(z_1(\alpha),z_2(\alpha))$ such that:
\begin{enumerate}
 \item $z_i$ are analytic, odd functions.
 \item $\partial_\alpha z_1(\alpha)>0, \forall \alpha\neq 0$, $\partial_\alpha z_1(0)=0$, and $\partial_\alpha z_2(0)>0$.
 \item $|z_2(\alpha)|<h_2$.
\end{enumerate}
\begin{proof}[Proof of Theorem \ref{ThmDepKappaIndepKappaPeriodic}]
In this case, the question whether the interface turns over or not is reduced to find a negative sign (resp. positive) of
\begin{align}
\da z_2(0)& \left(\int_0^\pi\frac{\da z_1(\beta)\sin(z_1(\beta))\sinh(z_2(\beta))}{(\cosh(z_2(\beta))-\cos(z_1(\beta)))^2}d\beta\right.\nonumber \\
& \left.+\frac{1}{4\pi}\int_0^\pi\frac{(\omega^{B}(\beta)+\omega^{B}(-\beta))(-1+\cosh(h_2)\cos(\beta))}{(\cosh(h_2)
-\cos(\beta))^2}d\beta\right),
\label{velocityperiodic}
\end{align}
where $\omega^{B}$ is
\begin{align}
\label{omegaperiodic}
\omega^{B}(\beta)=\mathcal{K}\int_{-\pi}^{\pi}\frac{\sin(\beta-z_1(\gamma))\da z_1(\gamma)}{\cosh(h_2+z_2(\gamma))-\cos(\beta-z_1(\gamma))}d\gamma
\end{align}
and we assume $\bar{\rho} = \frac{1}{2}$. 
We refer to \cite{Berselli-Cordoba-GraneroBelinchon:local-solvability-singularities-muskat} for the computations leading to these expressions. Plugging \eqref{omegaperiodic} into \eqref{velocityperiodic} we have to compute

\begin{align}
\displaystyle I^{B} & \displaystyle \equiv \da z_2(0)\left(\int_0^\pi\frac{\da z_1(\beta)\sin(z_1(\beta))\sinh(z_2(\beta))}{(\cosh(z_2(\beta))-\cos(z_1(\beta)))^2}d\beta\right. \nonumber \\
& \displaystyle +\frac{\mathcal{K}}{4\pi}\int_{0}^{\pi}\int_{-\pi}^{\pi}\frac{\sin(\beta-z_1(\gamma))\da z_1(\gamma)(-1+\cosh(h_2)\cos(\beta))}{(\cosh(h_2)-\cos(\beta))^2} \nonumber\\
&\times \displaystyle \left.\left(\frac{1}{\cosh(h_2+z_2(\gamma))-\cos(\beta-z_1(\gamma))}+\frac{1}{\cosh(h_2+z_2(\gamma))-\cos(-\beta-z_1(\gamma))}\right)d\beta d\gamma\right) \nonumber \\
& \displaystyle \equiv I^{B}_{1} + I^{B}_{2}.
\label{velocityflatbis}
\end{align}

We remark that the integrand of the 2D integral above is regular (does not even have an indetermination such as the 1D one) since we are assuming that $|z_2(\al)| < h_2$. We calculate $I^{B}_{1}$ as in the first case. However, the choice of a uniform grid in $I^{B}_{2}$ leads to high execution times or low precision. In order to ameliorate the performance of the algorithm, we perform the integration using an adaptive algorithm. We will start with the full domain $[0,\pi] \times [-\pi,\pi]$ and in each iteration we will use a 2D Simpson's rule.

\begin{align*}
\int_{a}^{b}\int_{c}^{d} f(x,y) dx dy & \in \frac{(b-a)(d-c)}{36}\left(16f\left(\frac{a+b}{2},\frac{c+d}{2}\right)\right. \\
& +4\left(f\left(a,\frac{c+d}{2}\right)+f\left(b,\frac{c+d}{2}\right)+f\left(\frac{a+b}{2},c\right)+f\left(\frac{a+b}{2},d\right)\right) \\
& +\left.\frac{}{}\left(f\left(b,c\right)+f\left(b,d\right)+f\left(a,c\right)+f\left(a,d\right)\right)\right) \\
& -\frac{(b-a)(d-c)}{2880} \left((b-a)^4\partial^{4}_xf\left([a,b],[c,d]\right) + (d-c)^4\partial^{4}_{y}f\left([a,b],[c,d]\right)\right).
\end{align*}

If the result meets some tolerance requirements in the form of having absolute or relative (with respect to the volume of the integration region - see Tables \ref{tabla2},\ref{tabla3} for the values used) width smaller than two constants (AbsTol and RelTol) we will save it and add it to the total. Otherwise, we bisect our domain by the midpoint in each of the two directions and call the integrator again with the new 4 subdomains recursively. We also keep track of the number of calls to the integrator and limit the depth of the levels of splitting in order to prevent infinite loops or stack overflows because of too stringent tolerances, but this was not necessary for the parameters specified below.

In order to prove the theorem we will take the following curves defined for $\alpha \in [-\pi,\pi]$ and extended periodically in the horizontal variable.

\begin{align*}
z_1(\al) & = \al - \sin(\al), \\
z_2(\al) & = \left\{
\begin{array}{lr}
\displaystyle \frac{\sin(3\al)}{3}- \sin(\al)\left(e^{-(\al+2)^2}+e^{-(\al-2)^2}\right)& \displaystyle \text{ in case }(a).\\
\displaystyle \frac{\sin(2\al)}{2}- \frac{2}{3}\sin(\al)\left(e^{-(\al+2)^2}+e^{-(\al-2)^2}\right) & \displaystyle \text{ in case } (b).\\
\displaystyle \frac{\sin(2\al)}{1.4}- 0.5\sin(\al)\left(e^{-(\al+2)^2}+e^{-(\al-2)^2}\right) & \displaystyle \text{ in case } (c).\\
\end{array}
\right.
\end{align*}

After running the program with the previous data we get the results summarized in Table \ref{tabla2}. This shows the theorem.

\begin{table}[h]
    \begin{center}
        \begin{tabular}{|c|c|c|c|}
            \hline
            Quantity & (a) & (b) & (c) \\
            \hline
            $I^{B}_{1}$ & $-0.7910^{7003}_{6993}$ & $0.124312^{5192}_{6103}$ & $-0.180519^{6014}_{5579}$\\
            \hline
            $I^{B}_{2}$ & $-0.12^{703437}_{699367}$ & $-0.1414^{5494}_{1422}$ & $-0.2127^{8188}_{1946}$ \\
            \hline
            $I^{B}$ & $-0.918^{1044}_{0636}$ & $-0.0171^{4242}_{0161}$ & $-0.393^{3015}_{2390}$\\
            \hline
            Runtime (sec) & 6.10 & 4.98 & 6.25 \\
            \hline
            Number of calls & 7305 & 5677 & 6405 \\
            \hline
            $N$ & \multicolumn{3}{|c|}{8192}\\
            \hline
            (RelTol,AbsTol) & \multicolumn{3}{|c|}{$(10^{-5},10^{-5})$} \\
            \hline
            $(\mathcal{K}, h_2)$ & \multicolumn{3}{|c|}{$(1,\frac{\pi}{2})$} \\
            \hline
        \end{tabular}
        \caption{Results of Theorem \ref{ThmDepKappaIndepKappaPeriodic}.}
        \label{tabla2}
    \end{center}
\end{table}

Notice that since $I^B$ is linear in $\mathcal{K}$, it will change sign at most once. Together with the values at $\mathcal{K} = \{-1,0,1\}$,  it guarantees existence and uniqueness of $K^{*}$. We should remark that although $\mathcal{K}=\pm 1$ are not physical, they are meaningful by understanding them in the sense of the appropriate limit.

\end{proof}

\begin{proof}[Proof of Theorem \ref{ThmDepKappaIndepKappaFlat}]

Let us assume $\bar{\rho} = \frac{1}{2}$. The turning or not (for a short enough time) for the flat at infinity case can be shown to be equivalent \cite{Berselli-Cordoba-GraneroBelinchon:local-solvability-singularities-muskat} to finding a sign of

\begin{align}
\label{velocityflat}
I^{C} \equiv \da z_2(0)\left(\text{P.V.}\int_0^\infty\frac{4\da z_1(\beta)z_1(\beta)z_2(\beta)}{((z_1(\beta))^2+(z_2(\beta))^2)^2}-\frac{1}{2\pi}\frac{(\omega^{C}(\beta)+\omega^{C}(-\beta))\beta^2}{(\beta^2+h_2^2)^2}d\beta\right),
\end{align}
where $\omega^{C}$ is defined by
\begin{align}
\label{omegaflat}
\omega^{C}(\beta)=2\mathcal{K}\text{P.V.}\int_{-\infty}^{\infty}\frac{(h_2+z_2(\gamma))\da z_2(\gamma)}{(h_2+z_2(\gamma))^2+(\beta-z_1(\gamma))^2}d\gamma.
\end{align}

Plugging \eqref{omegaflat} into \eqref{velocityflat} we have to compute
{
\begin{align}
\begin{array}{rl}
\displaystyle I^{C} & \displaystyle \equiv \da z_2(0)\left( 4\text{P.V.}\int_0^\infty\frac{\da z_1(\beta)z_1(\beta)z_2(\beta)}{((z_1(\beta))^2+(z_2(\beta))^2)^2}d\beta\right.  -\frac{\mathcal{K}}{\pi}\int_0^\infty\int_{-\infty}^{\infty}\frac{(h_2+z_2(\gamma))\da z_2(\gamma)\beta^2}{(\beta^2+h_2^2)^2} \\
&\displaystyle \times \left(\frac{1}{(h_2+z_2(\gamma))^2+(\beta-z_1(\gamma))^2}\right.
\left.\left.+\frac{1}{(h_2+z_2(\gamma))^2+(-\beta-z_1(\gamma))^2}\right)d\beta d\gamma\right) \\
& \displaystyle \equiv I^{C}_{1} + I^{C}_{2}.
\end{array}
\label{velocityflatbis2}
\end{align}
}%

Again, we compute $I^{C}_{1}$ as in Theorem \ref{ThmConfTurnsNoConfNoTurns}. It is important to notice that we are now integrating $I^{C}_{2}$ in an unbounded region. Even in the case that $z_2$ has compact support and the integral in $\gamma$ is different than zero in a compact set, the integral in $\beta$ cannot be reduced to integrate in a bounded region. Therefore, we split $I^{C}_{2}$ into a bounded part and an unbounded one. We now explain how to deal with the latter since the former is computed as in the previous Theorem.

We want to bound
{
\begin{align*}
I^C_{2,ub} \equiv \displaystyle -\frac{\mathcal{K}}{\pi} \da z_2(0) \int_M^\infty\int_{-\pi}^{\pi}\frac{(h_2+z_2(\gamma))\da z_2(\gamma)\beta^2}{(\beta^2+h_2^2)^2}
&\displaystyle \left(\frac{1}{(h_2+z_2(\gamma))^2+(\beta-z_1(\gamma))^2}\right. \\
 &\displaystyle \left.+\frac{1}{(h_2+z_2(\gamma))^2+(-\beta-z_1(\gamma))^2}\right) d\gamma d\beta
\end{align*}
}%
and we will take the following curves:

\begin{align*}
z_1(\al) & = \al - \sin(\al)e^{-K\al^{2}}, \quad K = 10^{-2}, \\
z_2(\al) & = \left\{
\begin{array}{lr}
\displaystyle \left(\frac{\sin(3\al)}{3}- \sin(\al)\left(e^{-(\al+2)^2}+e^{-(\al-2)^2}\right)\right)1_{\{|\al| \leq \pi\}} & \displaystyle \text{ in case }(a).\\
\displaystyle \left(\frac{\sin(2\al)}{2}- 0.85\sin(\al)\left(e^{-(\al+2)^2}+e^{-(\al-2)^2}\right)\right)1_{\{|\al| \leq \pi\}} & \displaystyle \text{ in case } (b).\\
\displaystyle \left(\frac{\sin(2\al)}{1.8}- 0.7\sin(\al)\left(e^{-(\al+2)^2}+e^{-(\al-2)^2}\right)\right)1_{\{|\al| \leq \pi\}} & \displaystyle \text{ in case } (c).\\
\end{array}
\right.
\end{align*}

We will provide bounds for $I^C_{2,ub}$ in this way:

{
\begin{align}
\label{eqBd2D}
|I^C_{2,ub}| \leq \displaystyle \frac{|\mathcal{K}|}{\pi} |\da z_2(0)| \int_M^\infty \frac{\beta^2}{(\beta^2+h_2^2)^2} d\beta \int_{-\pi}^{\pi}
&\displaystyle \left(\frac{|h_2+z_2(\gamma)||\da z_2(\gamma)|}{(h_2+z_2(\gamma))^2+(\beta-z_1(\gamma))^2}\right. \nonumber \\
 &\displaystyle \left.+\frac{|h_2+z_2(\gamma)||\da z_2(\gamma)|}{(h_2+z_2(\gamma))^2+(-\beta-z_1(\gamma))^2}\right)d\gamma
\end{align}
}%
and let
\begin{align*}
G(\beta) \equiv \frac{|\mathcal{K}|}{\pi} \int_{-\pi}^{\pi}
\left(\frac{|h_2+z_2(\gamma)||\da z_2(\gamma)|}{(h_2+z_2(\gamma))^2+(\beta-z_1(\gamma))^2}\right.
 \left.+\frac{|h_2+z_2(\gamma)||\da z_2(\gamma)|}{(h_2+z_2(\gamma))^2+(-\beta-z_1(\gamma))^2}\right)d\gamma.
\end{align*}

It is easy to check that $G(\beta)$ is monotone in $\beta$ for $\beta$ larger than $\|z_1\|_{L^{\infty}(-\pi,\pi)}$. Indeed,

$$ G(\beta) \leq G(M), \quad \text{ if we take } M = 14 \pi,$$
which is our choice of $M$ for the computer verification. Plugging this relation into \eqref{eqBd2D} we obtain

\begin{align}
|I^C_{2,ub}| & \leq  |\da z_2(0)| G(M) \int_M^\infty \frac{\beta^2}{(\beta^2+h_2^2)^2} d\beta \nonumber \\
& = |\da z_2(0)| G(M) \left(\frac{\pi}{4h_2} - \frac{1}{h_2}\arctan\left(\frac{M}{h_2}\right) + \frac{M}{2(h_2^2+M^2)}\right).
\end{align}

Thus, we are left to compute rigorous bounds for $G$. Let us denote by
\begin{align}
\label{defIG}
IG(\beta,\gamma) = \frac{|\mathcal{K}|}{\pi}
\left(\frac{|h_2+z_2(\gamma)||\da z_2(\gamma)|}{(h_2+z_2(\gamma))^2+(\beta-z_1(\gamma))^2}\right.
 \left.+\frac{|h_2+z_2(\gamma)||\da z_2(\gamma)|}{(h_2+z_2(\gamma))^2+(-\beta-z_1(\gamma))^2}\right)
 \end{align}
the integrand of $G$. That means
\begin{align*}
G(\beta) = \int_{-\pi}^{\pi} IG(\beta,\gamma)d\gamma.
\end{align*}
We perform the following integration scheme:
\begin{align*}
\int_{\gamma_i}^{\gamma_{i+1}} IG(\beta,\gamma)d\gamma =
\left\{
\begin{array}{cc}
IG(\beta,[\gamma_{i},\gamma_{i+1}])(\gamma_{i+1}-\gamma_{i}) & \text{ if } 0 \in IG(\beta,[\gamma_{i},\gamma_{i+1}]) \\
\frac{(\gamma_{i+1}-\gamma_{i})}{6}\left(IG(\beta,\gamma_{i})+IG(\beta,\gamma_{i+1})+4IG\left(\beta,\frac{\gamma_{i}+\gamma_{i+1}}{2}\right)\right) & \\
 -\frac{1}{2880}(\gamma_{i+1}-\gamma_{i})^{5}\partial^{4}_{\gamma}IG(\beta,[\gamma_{i},\gamma_{i+1}]) & \text{ otherwise}
\end{array}
\right.
\end{align*}
in which we apply a Simpson rule for the case where the integrand is smooth, otherwise we take the full interval that results in evaluating the integrand in the whole integration interval. We perform the integration in $\gamma$ over a uniform mesh $-\pi = \gamma_0 < \gamma_1 < \ldots < \gamma_{N_2} = \pi,$ $\gamma_i = -\pi + \frac{2\pi}{N_2}i$.

Therefore, adding all the contributions
\begin{align*}
G(M) = \sum_{i=0}^{N_2-1} \int_{\gamma_{i}}^{\gamma_{i+1}}IG(M,\gamma)d\gamma,
\end{align*}
we get the desired bound on $T$. The variable $N_2$ is user-specified in our program. The results are summarized in Table \ref{tabla3}. These prove the Theorem.

\begin{table}[h]
    \begin{center}
        \begin{tabular}{|c|c|c|c|}
            \hline
            Quantity & (a) & (b) & (c) \\
            \hline
            $I^{C}_{1}$ & $-0.745640^{1337}_{0299}$ & $0.00147^{1972}_{2074}$ & $-0.0087191^{9854}_{1782}$\\
            \hline
            $|I^C_{2,ub}|$ & $0.0000^{0000}_{2668}$ & $0.0000^{0000}_{2697}$ & $0.0000^{0000}_{3183}$\\
            \hline
            $I^{C}_{2} - I^C_{2,ub}$ & $-0.020^{6841}_{3465}$ & $-0.011^{6887}_{3785}$ & $-0.009^{9556}_{5855}$\\
            \hline
            $I^{C}$ & $-0.76^{63509}_{59599}$ & $-0.0^{10244}_{09879}$ & $-0.018^{7067}_{2728}$\\
            \hline
            Runtime (sec) & 6.96 & 8.30 & 8.11 \\
            \hline
            Number of calls & 9205 & 9177 & 8805 \\
            \hline
            $(N,N_2)$ & \multicolumn{3}{|c|}{(8192,256)}\\
            \hline
            (RelTol,AbsTol) & \multicolumn{3}{|c|}{$(10^{-5},10^{-5})$} \\
            \hline
            $(\mathcal{K}, h_2)$ & \multicolumn{3}{|c|}{$(1,\frac{\pi}{2})$} \\
            \hline
        \end{tabular}
        \caption{Results of Theorem \ref{ThmDepKappaIndepKappaFlat}}
        \label{tabla3}
    \end{center}
\end{table}

Again, as in the previous Theorem, $I^{C}$ is linear in $\mathcal{K}$ and by the same reasoning, we have existence and uniqueness of $K^{*}$.

\end{proof}

\begin{coment}
Notice our choice of the numerical parameters $N, N_2, M$ is not optimal. Smaller parameters might also work, however, as the time required to compute the intervals is not very long, we didn't try to optimize in terms of choosing different values of $N, N_2, M$. In Theorem  \ref{ThmConfinedInhomogeneous}, where the computational costs are higher, we integrate in an adaptive way without fixing the number of points.
\end{coment}

\subsection{The inhomogeneous, confined case}\label{SubsectionProofThm4}

In this subsection we will detail the refinements and technical details that led to the bifurcation diagram shown in Figure \ref{FigBifurcacion}, which illustrates Theorem \ref{ThmConfinedInhomogeneous}.

\subsubsection{Dimension reduction by complex integration}
In \cite{Berselli-Cordoba-GraneroBelinchon:local-solvability-singularities-muskat} the existence of turning singularities is proved by a continuity argument for the full problem \eqref{IIIeqv2}. Here we obtain these turning waves for the full range $|\mathcal{K}|<1$. We will write the equation for the velocity in a more suitable way by calculating explicitly some of the integrals using complex integration. We remark that we are transforming an a priori 4-dimensional problem into a 2-dimensional one, dramatically reducing the resources needed for its computation. We will denote the complex argument function, \emph{i.e.} the function that given a complex number returns its phase, by $\arg(z)$ and consider the branch that takes values in $[-\pi,\pi)$. We start with some useful Lemmas whose proof (similar to the proof of Lemma \ref{lemacitado}) we omit for the sake of brevity:
\begin{lem}\label{IIIlem1}
We have, for $-1 < d < 1$, $c, y \in \mathbb{R}$:
$$
\int_\RR\frac{\cos(y\xi)d\xi}{\cosh(c-\xi)+d}=-\frac{2\pi}{\sqrt{1-d^2}}\frac{\cos(yc)\sinh(y \arg(-d+\sqrt{1-d^2}i)-y\pi)}{\sinh(\pi y)}.
$$
\end{lem}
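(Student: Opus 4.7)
The plan is to reduce the integral to a canonical form by translation, then evaluate the resulting integral by residues along a rectangle of height $2\pi$, closely following the strategy of Lemma \ref{lemacitado}. First I would substitute $\eta = \xi - c$ and expand $\cos(y(\eta+c))$ to obtain
\begin{align*}
\int_\RR \frac{\cos(y\xi)}{\cosh(c-\xi)+d}d\xi = \cos(yc)\int_\RR \frac{\cos(y\eta)}{\cosh(\eta)+d}d\eta - \sin(yc)\int_\RR\frac{\sin(y\eta)}{\cosh(\eta)+d}d\eta.
\end{align*}
The second integral vanishes by oddness, so the prefactor $\cos(yc)$ already matches the one in the claimed formula and the task reduces to computing the first integral. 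Since its integrand is even in $y$, I would assume $y>0$, with the case $y=0$ following by continuity and $y<0$ by evenness of both sides of the identity.

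Writing $\cos(y\eta)=\re(e^{iy\eta})$, the next step is to integrate $f(z)=e^{iyz}/(\cosh(z)+d)$ along the rectangle $\Gamma_R$ with vertices $\pm R, \pm R + 2\pi i$. The periodicity $\cosh(z+2\pi i)=\cosh(z)$ together with $e^{iy(\eta+2\pi i)}=e^{-2\pi y}e^{iy\eta}$ imply that the top segment contributes exactly $-e^{-2\pi y}$ times the bottom, while the two vertical sides vanish as $R\to\infty$, using the bound $|\cosh(R+it)|\geq|\sinh R|$ valid for all $t\in[0,2\pi]$. The Residue Theorem then gives
\begin{align*}
(1-e^{-2\pi y})\int_\RR \frac{e^{iy\eta}}{\cosh(\eta)+d}d\eta = 2\pi i \sum_{\text{poles in }\Gamma_R}\mathrm{Res}(f).
\end{align*}
The simple poles of $f$ inside the strip $0<\im z<2\pi$ are the two roots of $\cosh(z)=-d$, namely $z_1=i\theta_0$ and $z_2=i(2\pi-\theta_0)$, where $\theta_0=\arg(-d+\sqrt{1-d^2}\,i)\in(0,\pi)$. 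Using $\sinh(i\theta_0)=i\sqrt{1-d^2}$ and $\sinh(i(2\pi-\theta_0))=-i\sqrt{1-d^2}$, the residues evaluate to $e^{-y\theta_0}/(i\sqrt{1-d^2})$ and $-e^{y\theta_0-2\pi y}/(i\sqrt{1-d^2})$ respectively.

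Finally, multiplying both sides of the resulting identity by $e^{\pi y}$ packages the four exponentials as $2\sinh(\pi y)$ on the left and $2\sinh(\pi y - y\theta_0)$ on the right, so that
\begin{align*}
\int_\RR \frac{e^{iy\eta}}{\cosh(\eta)+d}d\eta = -\frac{2\pi}{\sqrt{1-d^2}}\frac{\sinh(y\theta_0-y\pi)}{\sinh(\pi y)}.
\end{align*}
Taking real parts (the imaginary part vanishes by oddness of $\sin(y\eta)/(\cosh(\eta)+d)$) and multiplying by $\cos(yc)$ yields the desired formula. I expect no substantial obstacle here; the only delicate points are verifying the uniform decay of the vertical sides and checking that the branch of $\arg(-d+\sqrt{1-d^2}\,i)\in(0,\pi)$ is the one satisfying $\cos\theta_0=-d$, after which the remaining hyperbolic manipulations are routine.
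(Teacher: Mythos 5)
Your proof is correct and follows the same residue-calculus strategy that the paper indicates for this result (the authors omit the proof, saying only that it is similar to that of Lemma \ref{lemacitado}). The only cosmetic difference is that you integrate over a single rectangle of height $2\pi$ and use periodicity to relate the top edge to the bottom one, picking up just two poles, whereas the paper's model computation uses growing rectangles and sums a geometric series of residues; your branch choice $\theta_0=\arg(-d+\sqrt{1-d^2}\,i)\in(0,\pi)$ with $\cos\theta_0=-d$, the residue values, and the final hyperbolic simplification all check out.
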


\begin{lem}\label{IIIlem2}
We have, for $-1 < d < 1$, $b, c,  y \in \mathbb{R}$:
\begin{multline*}
\int_\RR\frac{\cos(y(\xi+c))(-\cosh(\xi)\cos(h_2)+b)d\xi}{(\cosh(\xi)+d)^ 2}\\
=\frac{2\pi}{\sinh(\pi y)}\left(\frac{y\cosh(y\cdot \emph{arg}(-d+i\sqrt{1-d^2})-\pi y)\cos(cy)(d\cos(h_2)+b)}{1-d^2}\right.\\
\left.+\frac{\sinh\left(y\cdot \emph{arg}(-d+i\sqrt{1-d^2})-\pi y\right)\cos(cy)\left(\cos(h_2)+db\right)}{(\sqrt{1-d^2})^3}\right).
\end{multline*}
\end{lem}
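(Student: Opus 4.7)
The plan is to extend the contour-integration argument used in Lemma \ref{lemacitado}, accounting for the fact that the denominator is now squared, so the poles of the integrand are double rather than simple. Before invoking complex analysis, I would expand $\cos(y(\xi+c)) = \cos(y\xi)\cos(cy) - \sin(y\xi)\sin(cy)$; since $(-\cosh(\xi)\cos(h_2)+b)(\cosh(\xi)+d)^{-2}$ is even in $\xi$, the $\sin(y\xi)$ contribution vanishes by odd symmetry, and pulling out the factor $\cos(cy)$ the problem reduces to computing
\[
\mathcal{I}(y) := \int_\RR \frac{e^{iy\xi}\,(-\cosh(\xi)\cos(h_2)+b)}{(\cosh(\xi)+d)^2}\,d\xi
\]
for $y>0$, then extending by evenness in $y$.

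Next, I would apply the residue theorem on the rectangular contour $[-R,R]\times[0,2\pi N]$ exactly as in the proof of Lemma \ref{lemacitado}. The vertical edges vanish as $R\to\infty$ by the $\cosh^{-2}$ decay; the top edge contributes $-e^{-2\pi N y}\mathcal{I}(y)$ thanks to the $2\pi i$-periodicity of $\cosh$; and then sending $N\to\infty$ yields $\mathcal{I}(y)=2\pi i\,S$, where $S$ is the sum of residues over all double poles in the upper half plane, namely $\xi_k^+ = i(\theta+2\pi k)$, $k\ge 0$, and $\xi_k^- = i(-\theta+2\pi k)$, $k\ge 1$, with $\theta = \arg(-d+i\sqrt{1-d^2})\in(0,\pi)$.

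The key technical step is the residue computation at the double poles. Setting $\eta=\xi-i\theta$ and using
\[
\cosh(\xi)+d \;=\; i\sqrt{1-d^2}\,\eta \;-\; \tfrac{d}{2}\eta^2 + O(\eta^3),
\]
I would expand the Laurent series of $e^{iy\xi}(-\cosh(\xi)\cos(h_2)+b)(\cosh(\xi)+d)^{-2}$ up to order $\eta^{-1}$ and read off the residue at $\xi_0^+$. The family $\xi_k^+$ differs only by a multiplicative factor $e^{-2\pi k y}$ (by periodicity), while for $\xi_k^-$ the linear coefficient of $\cosh(\xi)+d$ flips sign, which flips precisely the contribution that will become the $\sinh$ term while preserving the one that becomes the $\cosh$ term. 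Summing the two geometric series $\sum_{k\ge 0}e^{-2\pi k y}$ and using the identity $(1-e^{-2\pi y})^{-1}=e^{\pi y}/(2\sinh(\pi y))$ collapses everything into hyperbolic functions of the combination $\alpha := y(\theta-\pi)$, giving the announced formula up to the factor $\cos(cy)$.

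The main obstacle I anticipate is the residue bookkeeping: one must retain the subleading quadratic term of $\cosh(\xi)+d$ at each pole, since this is what produces the $(\cos(h_2)+db)/(1-d^2)^{3/2}$ coefficient (the $y(d\cos(h_2)+b)/(1-d^2)$ term comes from the naive leading-order residue). Once the two families $\xi_k^\pm$ are packaged symmetrically, the geometric-series summation and the collapse into $\cosh\alpha$, $\sinh\alpha$ is automatic, and evenness in $y$ combined with continuity at $y=0$ covers the remaining cases.
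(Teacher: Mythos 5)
Your proposal is correct and follows exactly the route the paper indicates (it omits the proof of Lemma \ref{IIIlem2} but states it is obtained as in Lemma \ref{lemacitado}, i.e., by residue calculus on large rectangles): the reduction by parity to $\cos(cy)\int_\RR e^{iy\xi}(-\cosh\xi\cos h_2+b)(\cosh\xi+d)^{-2}d\xi$, the identification of the double poles at $i(\theta+2\pi k)$ and $i(2\pi k-\theta)$ with $\theta=\arg(-d+i\sqrt{1-d^2})$, the second-order Laurent expansion $\cosh\xi+d=\pm i\sqrt{1-d^2}\,\eta-\tfrac{d}{2}\eta^2+O(\eta^3)$ producing the two coefficients $(d\cos h_2+b)/(1-d^2)$ and $(\cos h_2+db)/(1-d^2)^{3/2}$ with the sign flip between the two pole families, and the geometric summation yielding $\cosh$ and $\sinh$ of $y(\theta-\pi)$ over $\sinh(\pi y)$ all check out against the stated formula.
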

\begin{lem}\label{IIIlem3}
We have, for $0<a<\pi\leq c$, $b\in\RR$
$$
\int_\RR\frac{\cos(yb)\sinh(ay)}{\sinh(cy)}dy=\frac{\pi}{c}\frac{\sin\left(\frac{\pi}{c}a\right)}{\cos\left(\frac{\pi}{c}a\right)+\cosh\left(\frac{\pi}{c}b\right)}.
$$
\end{lem}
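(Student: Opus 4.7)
The plan is to adapt the contour-integration strategy from Lemma~\ref{lemacitado} to a function whose translate by $i\pi/c$ is a scalar multiple of itself. Since $\sinh(ay)/\sinh(cy)$ is even and $\sin(yb)$ is odd, I would first rewrite
$$\int_\RR\frac{\cos(yb)\sinh(ay)}{\sinh(cy)}\,dy \;=\; \int_\RR\frac{e^{iyb}\sinh(ay)}{\sinh(cy)}\,dy \;=\; \tfrac{1}{2}(I_+ - I_-),$$
where $I_\pm := \text{P.V.}\int_\RR H_\pm(y)\,dy$ and $H_\pm(z) := e^{(ib\pm a)z}/\sinh(cz)$. Each $H_\pm$ has a simple pole at $z=0$, but these singularities cancel in the difference $I_+ - I_-$, so the splitting is legitimate as a principal value.

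Next I would apply Cauchy's theorem to each $H_\pm$ on the rectangle with vertices $\pm R,\ \pm R+i\pi/c$, indented above $z=0$ on the bottom edge and below $z=i\pi/c$ on the top edge so that no poles are enclosed. The decisive identity is the quasi-periodicity
$$H_\pm(x + i\pi/c) \;=\; -\,e^{-\pi b/c\ \pm\ i\pi a/c}\,H_\pm(x),$$
which follows from $\sinh(cz + i\pi) = -\sinh(cz)$. This makes the top-edge integral a scalar multiple of the bottom-edge integral $I_\pm$. Combined with the half-residue contributions from the two semicircular indents (equal to $-i\pi/c$ at $z=0$ and $i\pi\,e^{-\pi b/c\pm i\pi a/c}/c$ at $z=i\pi/c$), Cauchy's theorem produces a single closed linear equation
$$\bigl(1 + e^{-\pi b/c\ \pm\ i\pi a/c}\bigr)\,I_\pm \;=\; \frac{i\pi}{c}\bigl(1 - e^{-\pi b/c\ \pm\ i\pi a/c}\bigr).$$
The main technical step is to show that the two vertical-side contributions vanish as $R\to\infty$. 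Using $|\sinh(c(R+iy))|^2 = \sinh^2(cR)+\sin^2(cy)$ together with the hypothesis $a<\pi\leq c$ (so in particular $a<c$), one obtains $|H_\pm(\pm R+iy)|\lesssim e^{-(c-a)R}$ uniformly for $y\in[0,\pi/c]$, giving the required exponential decay. This is where the hypothesis on $c$ enters, and is the only point demanding real care; everything else is symbolic manipulation.

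Finally, with $p = e^{-\pi b/c}$ and $q = e^{i\pi a/c}$ one finds
$$I \;=\; \tfrac{1}{2}(I_+ - I_-) \;=\; \frac{i\pi\,p\,(q^{-1}-q)}{c\,(1+pq)(1+pq^{-1})},$$
and then the routine simplifications $q^{-1}-q = -2i\sin(\pi a/c)$ and $(1+pq)(1+pq^{-1}) = 1+2p\cos(\pi a/c)+p^2 = 2p\bigl[\cosh(\pi b/c)+\cos(\pi a/c)\bigr]$ collapse this to $I = \tfrac{\pi}{c}\cdot\tfrac{\sin(\pi a/c)}{\cos(\pi a/c)+\cosh(\pi b/c)}$, matching the claim. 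The constraint $0<a<\pi$ plays the cosmetic role of keeping $\pi a/c\in(0,\pi)$, so that $\sin(\pi a/c)>0$.
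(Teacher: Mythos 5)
Your argument is correct: the quasi-periodicity identity, the half-residue contributions from the two indentations, the exponential decay of the vertical sides under $a<c$, and the final algebra with $p=e^{-\pi b/c}$, $q=e^{i\pi a/c}$ all check out, and the resulting linear equation $(1+pq^{\pm1})I_\pm=\frac{i\pi}{c}(1-pq^{\pm1})$ does collapse to the stated formula. Your route is, however, organized differently from the paper's. The paper decomposes $\cos(yb)\sinh(ay)=\frac12[\sinh((a-ib)y)+\sinh((a+ib)y)]$ and computes $I_{ss}=\int_\RR\sinh(wy)/\sinh(cy)\,dy$ on a rectangle of height $\frac32\frac{\pi}{c}$ that \emph{encloses} the pole at $i\pi/c$; there the top edge turns into an integral with $\cosh(cy)$ in the denominator, which forces a detour through an auxiliary integral $I_{cc}=\int_\RR\cosh(wy)/\cosh(cy)\,dy$ evaluated on a second contour, after which $I_{ss}=\frac{\pi}{c}\tan(w\frac{\pi}{2c})$ and the answer is a sum of two tangents. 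Your decomposition into pure exponentials $e^{(ib\pm a)z}$ makes the translate by $i\pi/c$ an exact scalar multiple of the original integrand, so a single indented rectangle of height exactly $\pi/c$ with no enclosed poles closes the computation in one self-contained linear equation per sign; the price is that each $I_\pm$ only exists as a principal value (the poles at $0$ cancel in the difference, as you note). Both proofs hinge on the same periodicity of $\sinh(cz)$ and the same decay estimate on the vertical sides, but yours avoids the auxiliary $\cosh/\cosh$ integral entirely and is the more streamlined of the two.
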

\begin{proof}
Using classical trigonometric identities, we have,
$$
\cos(yb)\sinh(ay)=\frac{1}{2}\left[\sinh(ay-iby)+\sinh(ay+iby)\right],
$$
so, we need to compute the integral
$$
I_{ss}=\int_{\RR}\frac{\sinh(wy)}{\sinh(cy)}dy,
$$
for the appropriate $w$.
\begin{figure}[h!]\centering
\includegraphics[scale=0.75]{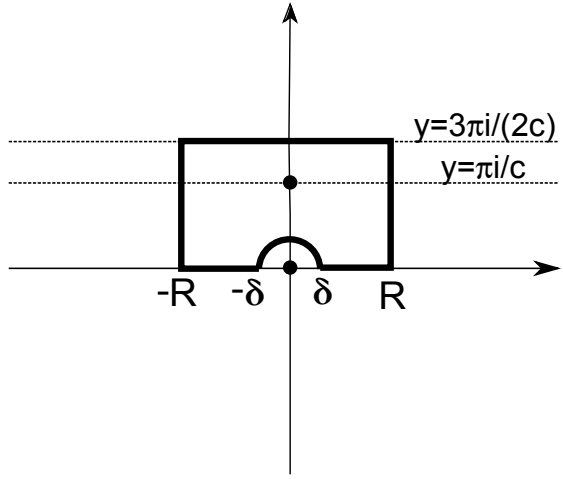}
\caption{Contour of integration.}
\label{poloslema4}
\end{figure}
We use complex integration and the Residue Theorem. We consider the contour given by $\displaystyle\mathcal{C}_1=\cup_i\Gamma_i$ and by $R_1$ the interior region delimited by $\mathcal{C}_1$, where
$$
\Gamma_1=\left\{z=x+\frac{3}{2}\frac{\pi}{c}i, x\in (-R,R)\right\}, \text{ }\Gamma_2=\{z=x, x\in(-R,-\delta)\cup(\delta,R)\},
$$
$$
\Gamma_3=\{z=\delta e^{i\theta}, \theta\in(0,\pi)\}, \Gamma_{4}=\left\{z=\pm R+iy,y\in(0,\frac{3}{2}\frac{\pi}{c})\right\}.
$$
Since $c>a$ and $b$ only deals with oscillations with bounded amplitude, we have
$$
\lim_{R\rightarrow\infty}\int_{\Gamma_4}\frac{\sinh(wz)}{\sinh(cz)}dz=0,
$$
$$
\lim_{\delta\rightarrow 0}\int_{\Gamma_3}\frac{\sinh(wz)}{\sinh(cz)}dz\leq \lim_{\delta\rightarrow 0}C\delta=0.
$$
Thus, we are left with
$$
\lim_{\delta\rightarrow 0, R\rightarrow\infty}\int_{\mathcal{C}_1}\frac{\sinh(wz)}{\sinh(cz)}dz=\int_{-\infty}^\infty\frac{\sinh(wy)}{\sinh(cy)}dy+\int_{-\infty}^\infty\frac{\sinh\left(wy+w\frac{3}{2}\frac{\pi}{c}i\right)}{i\cosh(c y)}dy=2\pi i\text{Res}_{ss},
$$
where
$$
\text{Res}_{ss}=\sum_{\xi\in R_1, \xi \text{ poles}} \text{Res}\left(\frac{\sinh(wz)}{\sinh(cz)},\xi\right)=-\frac{\sinh\left(\frac{w\pi}{c}i\right)}{c}.
$$
We need to compute a helpful integral
$$
I_{cc}=\int_{-\infty}^\infty \frac{\cosh(wy)}{\cosh(cy)}dy.
$$
We define the contour $\mathcal{C}_2$ as the boundary of the rectangle $R_2=[-R,R]\times [0,\frac{\pi}{c}]$. We get
$$
\int_{\mathcal{C}_2}\frac{\cosh(wz)}{\cosh(cz)}dz=\int_{-\infty}^\infty\frac{\cosh(wy)}{\cosh(cy)}dy+\int_{-\infty}^\infty\frac{\cosh\left(wy+w\frac{\pi}{c}i\right)}{\cosh(c y)}dy=2\pi i\text{Res}_{cc},
$$
where
$$
\text{Res}_{cc}=\sum_{\xi\in R_2, \xi \text{ poles}} \text{Res}\left(\frac{\cosh(wz)}{\cosh(cz)},\xi\right)=-\frac{\cosh\left(\frac{w\pi}{2c}i\right)}{ci}.
$$
Using trigonometric identities, we obtain
$$
\left(1+\cos\left(w\frac{\pi}{c}\right)\right)I_{cc}+i\sin\left(w\frac{\pi}{c}\right)\int_{-\infty}^\infty\frac{\sinh\left(wy\right)}{\cosh(c y)}dy=2\pi i\text{Res}_{cc}.
$$
Therefore, using the oddness of the second integrand,
$$
I_{cc}=\frac{\frac{2\pi}{c}\cosh\left(\frac{w\pi i}{2c}\right)}{1+\cos\left(w\frac{\pi}{c}\right)}=\frac{\pi}{c}\frac{1}{\cos\left(w\frac{\pi}{2c}\right)}.
$$
Inserting this value in the previous expression for $I_{ss}$, we obtain
$$
I_{ss}+\sin\left(w\frac{3\pi}{2c}\right)I_{cc}=2\pi i \frac{\sinh\left(w\frac{\pi i}{c}\right)}{-c}=\frac{2\pi\sin\left(w\frac{\pi}{c}\right)}{c},
$$
thus,
$$
I_{ss}=\frac{\pi}{c}\tan\left(w\frac{\pi}{2c}\right).
$$
Finally,
$$
\int_\RR\frac{\cos(yb)\sinh(ay)}{\sinh(cy)}dy=\frac{\pi}{2c}\left(\tan\left((a-ib)\frac{\pi}{2c}\right)+\tan\left((a+ib)\frac{\pi}{2c}\right)\right)
=\frac{\pi}{c}\frac{\sin\left(\frac{\pi}{c}a\right)}{\cos\left(\frac{\pi}{c}a\right)+\cosh\left(\frac{\pi}{c}b\right)}.
$$
\end{proof}
Then, according to Lemmas \ref{IIIlem1} and \ref{IIIlem2}, we have
\begin{multline}\label{IIIa}
\int_\RR\frac{\cos(y\xi)d\xi}{\cosh(-\beta-z_1(\gamma)-\xi)-\cos(h_2+z_2(\gamma))}\\
=\frac{2\pi}{\sin(h_2+z_2(\gamma))}\frac{\cos(y(\beta+z_1(\gamma)))\sinh(y(\pi-h_2-z_2(\gamma)))}{\sinh(\pi y)}
\end{multline}
and
\begin{multline}\label{IIIb}
\int_\RR\frac{\cos(y\xi)d\xi}{\cosh(-\beta-z_1(\gamma)-\xi)+\cos(h_2-z_2(\gamma))}\\
=\frac{2\pi}{\sin(h_2-z_2(\gamma))}\frac{\cos(y(\beta+z_1(\gamma)))\sinh(y(h_2-z_2(\gamma)))}{\sinh(\pi y)},
\end{multline}
\begin{equation}\label{IIIc}
\int_\RR\frac{\cos(y(\beta+z_1(\gamma)))(-\cosh(\beta)\cos(h_2)+1)d\beta}{(\cosh(\beta)-\cos(h_2))^ 2}=2\pi\frac{y\cosh(y (\pi- h_2))\cos(z_1(\gamma)y)}{\sinh(\pi y)}
\end{equation}
and
\begin{multline}\label{IIId}
\int_\RR\frac{\cos(y(\beta+z_1(\gamma)))(-\cosh(\beta)\cos(h_2)-\cos^ 2(h_2)+\sin^2(h_2))d\beta}{(\cosh(\beta)+\cos(h_2))^ 2}\\
=\frac{2\pi \cos(z_1(\gamma)y)}{\sinh(\pi y)}\left(y\cosh(y h_2)-\frac{2 \sinh\left(y h_2\right)}{\tan(h_2)}\right).
\end{multline}

In particular, using Lemma \ref{IIIlem3},
\begin{multline}\label{eqaux}
\int_\RR\frac{\cos(y(\beta+z_1(\gamma)))\sinh((\pi-h_2-z_2(\gamma))y)}{\sinh(\pi y)}dy\\
=\frac{\sin\left(\pi-h_2-z_2(\gamma)\right)}{\cos\left(\pi-h_2-z_2(\gamma)\right)+\cosh\left(\beta+z_1(\gamma)\right)}\\
=\frac{\sin\left(h_2+z_2(\gamma)\right)}{-\cos\left(h_2+z_2(\gamma)\right)+\cosh\left(\beta+z_1(\gamma)\right)},
\end{multline}
and
\begin{multline}\label{eqaux2}
\int_\RR\frac{\cos(y(\beta+z_1(\gamma)))\sinh((-h_2+z_2(\gamma))y)}{\sinh(\pi y)}dy=\frac{\sin\left(-h_2+z_2(\gamma)\right)}{\cos\left(-h_2+z_2(\gamma)\right)+\cosh\left(\beta+z_1(\gamma)\right)}.
\end{multline}

We proceed now to calculate $\da v_1(0)$. We fix $\bar{\rho} = 1$. Then, the appropriate expression is
$$
\paa v_1(0)=\pat\paa z_1(0)=I_1+I_2,
$$
where
$$
I_1=2\paa z_2(0)\int_0^\infty \frac{\paa z_1(\beta)\sinh(z_1(\beta))\sin(z_2(\beta))}{\left(\cosh(z_1(\beta))-\cos(z_2(\beta))\right)^2}+\frac{\paa z_1(\beta)\sinh(z_1(\beta))\sin(z_2(\beta))}{\left(\cosh(z_1(\beta))+\cos(z_2(\beta))\right)^2}d\beta,
$$
and
\begin{multline*}
I_2=\frac{\paa z_2(0)}{4\pi}\text{P.V.}\int_\RR\frac{\varpi_2(-\beta)(-\cosh(\beta)\cos(h_2)+1)}{(\cosh(\beta)-\cos(h_2))^2}d\beta\\
+\frac{\paa z_2(0)}{4\pi}\text{P.V.}\int_\RR\frac{\varpi_2(-\beta)(-\cosh(\beta)\cos(h_2)-\cos^2(h_2)+\sin^2(h_2))}{(\cosh(\beta)+\cos(h_2))^2}d\beta,
\end{multline*}
where $\varpi_2$ is given in \eqref{IIIw2defb}.
Now we use Lemmas \ref{IIIlem1} and \ref{IIIlem2} to compute explicitly some of the integrals in $I_2$. Notice that the space is $\sigma-$finite and, taking the absolute value, we can apply Tonelli-Fubini Theorem. First, we integrate in $\xi$ using \eqref{IIIa} and \eqref{IIIb}, and by Lemma \ref{IIIlem3} and equations \eqref{eqaux} and \eqref{eqaux2}, we obtain
\begin{align*}
I_2=&\frac{\paa z_2(0)\mathcal{K}}{2\pi}\bigg{[}\text{P.V.}\int_\RR\int_\RR\text{P.V.}\int_\RR\frac{
\frac{\paa z_2(\gamma)\sinh(y(\pi-h_2-z_2(\gamma)))}{\sinh(\pi y)+\mathcal{K}\sinh(2h_2 y)}\cos(y(\beta+z_1(\gamma))) }{(\cosh(\beta)-\cos(h_2))^2 (-\cosh(\beta)\cos(h_2)+1)^{-1}}dy d\gamma d\beta\\
&+\text{P.V.}\int_\RR\int_\RR\text{P.V.}\int_\RR\frac{\frac{\paa z_2(\gamma)\sinh(y(\pi-h_2-z_2(\gamma)))}{\sinh(\pi y)+\mathcal{K}\sinh(2h_2 y)}\cos(y(\beta+z_1(\gamma)))}{(\cosh(\beta)+\cos(h_2))^2(-\cosh(\beta)\cos(h_2)-\cos(2h_2))^{-1}}dy d\gamma  d\beta\\
&+\text{P.V.}\int_\RR\int_\RR\text{P.V.}\int_\RR\frac{\frac{\paa z_2(\gamma)\sinh(y(h_2-z_2(\gamma)))}{\sinh(\pi y)+\mathcal{K}\sinh(2h_2 y)}\cos(y(\beta+z_1(\gamma)))}{(\cosh(\beta)-\cos(h_2))^2(-\cosh(\beta)\cos(h_2)+1)^{-1}} dy d\gamma  d\beta\\
&+\text{P.V.}\int_\RR\int_\RR\text{P.V.}\int_\RR\frac{\frac{\paa z_2(\gamma)\sinh(y(h_2-z_2(\gamma)))}{\sinh(\pi y)+\mathcal{K}\sinh(2h_2 y)}\cos(y(\beta+z_1(\gamma)))}{(\cosh(\beta)+\cos(h_2))^2(-\cosh(\beta)\cos(h_2)-\cos(2h_2))^{-1}}dy d\gamma d\beta\bigg{]}.
\end{align*}

Now we integrate in $\beta$ using \eqref{IIIc} and \eqref{IIId}:
\begin{multline*}
I_2=\paa z_2(0)\mathcal{K}\int_\RR\text{P.V.}\int_\RR
\frac{\paa z_2(\gamma)\cos(yz_1(\gamma))}{(\sinh(\pi y)+\mathcal{K}\sinh(2h_2 y))\sinh(\pi y)}\\
\times\left(\sinh(y(\pi-h_2-z_2(\gamma)))+\sinh(y(h_2-z_2(\gamma)))\right)\\
\times \left(y\cosh(y(\pi-h_2))+y\cosh(yh_2)-\frac{2\sinh(yh_2)}{\tan(h_2)}\right) d\gamma dy.
\end{multline*}
Using the oddness of $z_i$, we obtain
\begin{multline*}
I_2=4\paa z_2(0)\mathcal{K}\int_0^\infty\int_0^\infty
\frac{\paa z_2(\gamma)\cos(yz_1(\gamma))}{(\sinh(\pi y)+\mathcal{K}\sinh(2h_2 y))\sinh(\pi y)}\\
\times\left(2\sinh\left(y\frac{\pi}{2}\right)\cosh\left(y z_2(\gamma)\right)\cosh\left(y\left(\frac{\pi}{2}-h_2\right)\right)\right)\\
\times \left(y\cosh(y(\pi-h_2))+y\cosh(yh_2)-\frac{2\sinh(yh_2)}{\tan(h_2)}\right) d\gamma dy,
\end{multline*}
and, using trigonometrical identities, we get the final expression
\begin{multline*}
I_2=4\paa z_2(0)\mathcal{K}\int_0^\infty\int_0^\infty
\frac{\paa z_2(\gamma)\cos(z_1(\gamma)y)}{(\sinh(\pi y)+\mathcal{K}\sinh(2h_2 y))\cosh\left(y\frac{\pi}{2}\right)}\\
\times\left(2y\cosh\left(\frac{y\pi}{2}- y h_2\right)\cosh\left(\frac{y \pi}{2}\right)-\frac{2\sinh\left(y h_2\right)}{\tan(h_2)}\right)\\
\times \cosh\left(y z_2(\gamma)\right)\cosh\left(y\left(\frac{\pi}{2}-h_2\right)\right) d\gamma dy.
\end{multline*}




\subsubsection{Technical details concerning Theorem \ref{ThmConfinedInhomogeneous}(a)}


The first four statements of Theorem \ref{ThmConfinedInhomogeneous} can be deduced from Figure \ref{FigBifurcacion}. In this section, we explain the algorithms and the technical details that led us to the rigorous computation of the bifurcation diagram.

We have been more careful in the optimization of the codes concerning the diagram since we were expecting a higher computation time. However, the possibility of parallelization did not force us to optimize up to a very low level, just to simply maintain the correct complexity of the code. The implementation is now split into several files, and many of the headers of the functions (such as the integration methods) contain pointers to functions (the integrands) so that they can be reused for an arbitrary amount of integrals with minimal changes and easy and safe debugging.

The initial condition family we used for the bifurcation diagram was
\begin{align}
z_1(\al) & = \al - \sin(\al)e^{-K \al^{2}}, \quad K = 10^{-4} \nonumber \\
z_2(\al) & = h_2\frac{3}{\pi}\left(\frac{\sin(3\al)}{3}- \frac{\sin(\al)}{2.5}\left(e^{-(\al+2)^2}+e^{-(\al-2)^2}\right)\right)1_{\{|\al| \leq \pi\}}. \label{initialConditionBif}
\end{align}

It is easy to check that $z$ is odd, $z_1(\al)$ is strictly monotone except at $\al = 0$, $\da z_2(0) > 0$ and $|z_2| < h_2$. A more precise bound is given in Lemma \ref{LemmaBoundZ2}. We will compute the bifurcation diagram in the region $(h_2,\mathcal{K}) = \left[\frac{1}{4},\frac{5}{4}\right] \times \left[-1,1\right]$.

The algorithm for the computation of the bifurcation diagram is as follows: we define a structure called \texttt{ParameterSet}, which encapsulates all the necessary information about the parameters and the information needed by the integration procedures in order to compute $\da v_1(0,0)$ for those parameters. More precisely, a ParameterSet contains:
\begin{itemize}
\item Two intervals, \texttt{Left} and \texttt{Right}, which set the limits for the bounded, singularity and unbounded regions (i.e. singularity $ = [0,\text{Left}]$, bounded $= [\text{Left,Right}]$, unbounded $ = [\text{Right},\infty)$, whenever they make sense). In our proof, Left $=0.125$, Right = $16.125$.
\item Two doubles, \texttt{AbsTol} and \texttt{RelTol}, which limit the precision up to which the integrals are computed. In our proof, AbsTol $=$ RelTol $= 10^{-5}$.
\item Two intervals, \texttt{$h_2$} and \texttt{Kappa}, which are the rectangle in the parameter space we are calculating.
\end{itemize}

We mantain a queue (implemented using the Standard Template Library (STL) \texttt{Queue}), in which we store all the ParameterSets to be computed. While the queue is not empty, we take the top element, pop it and give an enclosure of $\da v_1(0,0)$ for this region. Three different cases arise:
\begin{itemize}
\item The enclosure is positive.
\item The enclosure is negative.
\item We can not say anything about its positivity.
\end{itemize}

In both the first two cases, the result is output to its corresponding file (one for the regions for which there is a turning singularity, another for the ones for which there is not). If, on the contrary, we are in the third case, the ParameterSet is split into other narrower ParameterSets which are pushed in the queue. This splitting is only done if the dimensions (both in $h_2$ and $\mathcal{K}$) are bigger than a given limit, which in our case was set to $5 \cdot 10^{-3}$ for the 2 parameters. Moreover, the splitting is not done in a uniform way. We found heuristically that a splitting that cut in 4 in the $h_2$-dimension and in 2 in the $\mathcal{K}$-dimension balanced the width of $I_1$ and $I_2$. If the parameter interval is too narrow, we output the result to a third file, which accounts for the unknown regions.

$I_1$ is split into two parts as in the discussion from Subsection \ref{SubsectionProofsThm23}: a bounded one and a singularity one. The bounded part is calculated using a Gauss-Legendre quadrature of order 2, given by
\begin{multline*}
\int_{a}^{b} f(\eta) d\eta \in  \frac{b-a}{2}\left(f\left(\frac{b-a}{2}\frac{\sqrt{3}}{3} + \frac{b+a}{2}\right)\right.\\\left.+f\left(-\frac{b-a}{2}\frac{\sqrt{3}}{3}+ \frac{b+a}{2}\right)\right)
+\frac{1}{4320}(b-a)^{5}f^{4}([a,b]).
\end{multline*}

Other quadratures of several orders (Gauss-Legendre, Newton-Cotes) were tested and they resulted either in worse results or similar results but worse runtime performance. Moreover, the integration was done in an adaptive way. For each region, we accepted or rejected the result depending on the width in an absolute and a relative way. It is important to notice that because of the uncertainty of the parameters, division by zero is easy to find, even in small integration intervals. In such cases, bisection in the parameter space is needed. We developed extra mechanisms to take care of these cases and discard a ParameterSet once a division by zero is found.

The number of levels of subdivision was also limited, since the uncertainty of the parameters might yield wide enclosures of the integral even with infinite precision. In our case, the maximum number of subdivisions for a non-singular one-dimensional integral was 18, totaling a maximum number of subintervals equal to $2^{18}$, which can be carried out roughly under 90 seconds. Another feature of the integration method is that instead of subdividing the integration intervals by the midpoint (in other words, by the arithmetic mean of the endpoints), we subdivided by the \emph{geometric} mean of the endpoints. While the arithmetic division minimizes the length of the longest piece after the division, the geometric one minimizes the piece with the biggest ratio between its endpoints. This can be particularly useful in many cases: for example in order to avoid divisions by zero for integrands of the type $\frac{1}{\sinh(ay)-\sinh(by)}$, which is the case of $I_2$. However, the geometric division also performs better for $I_1$ and we bisect using that method.

The singular part of $I_1$ was also integrated and not bounded as in the previous sections.  In this case, the algorithm works as follows: we perform Taylor series of order 6 and 4 respectively of the numerator and the denominator and integrate as in \eqref{pax4D}. Potentially this could fail because the uncertainty in $h_2$ (and therefore in $z_2$) could yield a Taylor series in which $0$ belongs to $\da^{4} \mathcal{D}([0,\varepsilon])$. Whenever this happens, we try to integrate using a Gauss-Legendre quadrature of order 2. The integration division is in this case arithmetic since 0 belongs to our integration domain. The maximum subdivision level was set to 12 ($2^{12}$ intervals). If even the Gauss-Legendre quadrature fails, then we return an error and bisect in the space of parameters.

We note that both the singular and the bounded part of $I_1$ are independent of $\mathcal{K}$. For performance purposes, we kept two STL \texttt{Map<ParameterSet,\-Interval,\allowbreak ComparisonFunction>}, with a ComparisonFunction that only sorts by $h_2$ so that for a ParameterSet, we check if we have calculated the values of $I_1$ for that $h_2$ before. If not, once we calculate them we store them in the map.

Regarding $I_2$, we divide it into three regions: singularity, bounded and unbounded. The bounded region is calculated using a 2 dimensional Gauss-Legendre quadrature and geometric division of the subintervals. Here it is clear the need of this subdivision because we want to avoid division by zero and
\begin{align*}
0 \in \sinh(\pi Y) - \sinh(2 h_2 Y) \Leftrightarrow \frac{\sup(Y)}{\inf(Y)} \geq \frac{\pi}{2h_2},
\end{align*}
so the objective is to keep the quotient $\frac{\sup(I)}{\inf(I)}$ as small as possible for every integration interval $I$. The maximum number of subdivision levels was set to 8 ($2^{16}$ rectangles) and the computation time of the bounded part of $I_2$ was well under the 2 minute mark.

For the singularity part, we took the intersection between the interval computed by Gauss-Legendre integration and Taylor expansions (in this case, the expansion is of order 1 in both the numerator and the denominator).
The maximum number of subdivision levels was 7 ($2^{14}$ rectangles).

We end the discussion with the estimations of the unbounded region
\begin{multline*}
I_2^{ub}=4\paa z_2(0)|\mathcal{K}|\int_M^\infty\bigg{(}\int_0^\pi\bigg{|}
\frac{\paa z_2(\gamma)\cos(z_1(\gamma)y)}{(\sinh(\pi y)+\mathcal{K}\sinh(2h_2 y))\cosh\left(y\frac{\pi}{2}\right)}\\
\times\left(2y\cosh\left(\frac{y\pi}{2}- y h_2\right)\cosh\left(\frac{y \pi}{2}\right)-\frac{2\sinh\left(y h_2\right)}{\tan(h_2)}\right)\\
\times \cosh\left(y z_2(\gamma)\right)\cosh\left(y\left(\frac{\pi}{2}-h_2\right)\right)\bigg{|} d\gamma \bigg{)} dy.
\end{multline*}

We will bound the tails using the following inequalities, which are very easy to check,

\begin{align*}
\frac{1}{2} e^{x} \leq \cosh(x) \leq e^{x} \\
\frac{1}{4} e^{x} \leq \sinh(x) \leq \frac{1}{2} e^{x}, \quad x \geq \log(2).
\end{align*}

Now we can show the following naive bounds

\begin{align*}
4\cosh\left(y z_2(\gamma)\right)\cosh\left(y\left(\frac{\pi}{2}-h_2\right)\right)
& \leq 4 e^{-y\left(h_2-\frac{\pi}{2}-\|z_2\|_{L^{\infty}}\right)} \\
\left|\frac{2y\cosh\left(\frac{y\pi}{2}- y h_2\right)\cosh\left(\frac{y \pi}{2}\right)-\frac{2\sinh\left(y h_2\right)}{\tan(h_2)}}{\cosh\left(y\frac{\pi}{2}\right)}\right|
& \leq 2e^{y\left(\frac{\pi}{2}-h_2\right)}y+2\frac{e^{y\left(h_2-\frac{\pi}{2}\right)}}{\tan\left(h_2\right)}.
\end{align*}

For the last factor, we distinguish two cases:

\begin{multline}\label{bichaco}
\frac{1}{(\sinh(\pi y)+\mathcal{K}\sinh(2h_2 y))} \leq
\left\{
\begin{array}{cc}
\displaystyle\frac{2e^{-\pi y}}{1-e^{-2\pi M}}, & \text{ if } \mathcal{K} \geq 0 \\
\displaystyle e^{-\pi y} \frac{2}{1-e^{-2\pi M} - |\mathcal{K}|e^{-M(\pi - 2h_2)}}, & \text{ if } \mathcal{K} < 0 \\
\end{array}
\right\}\\
\equiv e^{-\pi y}C(M,\mathcal{K},h_2),
\end{multline}
where we have used
\begin{align*}
\frac{1}{\sinh(\pi y)} = \frac{2}{e^{\pi y} - e^{-\pi y}} = e^{-\pi y}\frac{2}{1 - e^{-2\pi y}}
\leq e^{-\pi y}\frac{2}{1 - e^{-2\pi M}}.
\end{align*}
Putting all the estimates together, we need to integrate in $y$ and we get

\begin{align*}
& 8 \max_{\mathcal{K}, h_2}\{C(M,\mathcal{K},h_2)\}\int_{M}^{\infty}\left(\frac{e^{-y(\pi - \|z_2\|_{L^{\infty}})}}{\tan(h_2)}+e^{-y(2h_2-\|z_2\|_{L^{\infty}})}y\right)dy \\
 = & 8 \max_{\mathcal{K},h_2}\{C(M,\mathcal{K},h_2)\}\frac{e^{-M(\pi - \|z_2\|_{L^{\infty}})}}{\tan(h_2)\left(\pi - \|z_2\|_{L^{\infty}}\right)} \\
 + & 8 \max_{\mathcal{K},h_2}\{C(M,\mathcal{K},h_2)\}\frac{e^{-M(2h_2 - \|z_2\|_{L^{\infty}})}}{2h_2 - \|z_2\|_{L^{\infty}}}\left(M + \frac{1}{2h_2-\|z_2\|_{L^{\infty}}}\right)
\end{align*}

Finally, we can bound $\|z_2\|_{L^{\infty}}$ in terms of $h_2$ in the following way:

\begin{lem}
\label{LemmaBoundZ2}
Let $z_2(\al)$ be
\begin{align*}
z_2(\al) & = \frac{3}{\pi}\left(\frac{\sin(3\al)}{3}- \frac{\sin(\al)}{2.5}\left(e^{-(\al+2)^2}+e^{-(\al-2)^2}\right)\right)1_{\{|\al| \leq \pi\}}
\end{align*}
Then $\|z_2\|_{L^{\infty}} < 0.65$.
\end{lem}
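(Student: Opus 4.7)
The function $z_2$ vanishes outside $[-\pi,\pi]$ and is odd, so it suffices to prove $\max_{\al\in[0,\pi]}|z_2(\al)|<0.65$. Consistent with the computer-assisted framework of the paper, the plan is to partition $[0,\pi]$ into a fine uniform mesh $\al_i=i\pi/N$ and, on each subinterval $I_i=[\al_i,\al_{i+1}]$, produce a rigorous enclosure $z_2(I_i)\subset[\underline{v}_i,\overline{v}_i]$ via C-XSC interval arithmetic. The claim then reduces to verifying $\max_i\max(|\underline{v}_i|,|\overline{v}_i|)<0.65$, which is a finite, mechanical check.

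For the enclosure to be sharp enough, I would write $z_2=T_1+T_2$ with
$$
T_1(\al)=\tfrac{1}{\pi}\sin(3\al),\qquad T_2(\al)=-\tfrac{6}{5\pi}\sin(\al)\bigl(e^{-(\al+2)^2}+e^{-(\al-2)^2}\bigr),
$$
and keep track of them separately. Note that the crude bound $\|T_1\|_{L^\infty}\le 1/\pi$ together with $\|T_2\|_{L^\infty}\le \tfrac{6}{5\pi}(1+e^{-4})$ only gives $\|z_2\|_{L^\infty}\le 0.71$, which misses the target. Hence the decisive point is that the local maxima of $|T_1|$ (near $\al=\pi/6,\pi/2,5\pi/6$) and of $|T_2|$ (near $\al=2$) are well separated, and this separation is captured automatically by an interval evaluation on a sufficiently refined mesh.

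Since naive interval substitution incurs the usual dependency overestimation (because $\al$ appears many times), on each $I_i$ I would use the mean-value enclosure $z_2(I_i)\subset z_2(\al_i^\star)+z_2'(I_i)\cdot(I_i-\al_i^\star)$, with $\al_i^\star$ the midpoint of $I_i$ and $z_2'$ an explicit elementary combination of sines, cosines, polynomials, and Gaussians that admits a clean C-XSC interval extension. This reduces the effective width of the enclosure to second order in $|I_i|$ without any delicate analysis.

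The main (and only) obstacle is this dependency overestimation; the mean-value form handles it. Since the true supremum is around $0.44$, which is comfortably below the target $0.65$, a moderate choice such as $N=2^{10}$ will certainly close the gap, and no adaptive refinement should be necessary.
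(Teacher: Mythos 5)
Your approach is essentially the paper's: the paper also proves this lemma by a computer-assisted interval-arithmetic range bound, using an adaptive branch-and-bound (bisect any subinterval whose enclosure is wider than a tolerance, merge with the interval $\max$) rather than your fixed uniform mesh with mean-value forms; both are standard and valid ways to control dependency overestimation. However, one factual claim in your feasibility argument is wrong and matters here: the true supremum of $|z_2|$ is not ``around $0.44$'' (that is roughly the value at $\al=2$, where only the Gaussian term is large); the two terms reinforce near $\al\approx 1.65$, and the paper's verified computation gives $\|z_2\|_{L^\infty}\in 0.64627^{3239}_{4666}$. So the margin below $0.65$ is about $3.7\times 10^{-3}$, some fifty times smaller than you assume, and your assertion that a moderate mesh ``will certainly close the gap'' and that ``no adaptive refinement should be necessary'' rests on a miscalculation. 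The method still goes through --- with the mean-value form the excess width of the computed maximum is $O(h^2\sup|z_2''|)$, which for $N=2^{10}$ is on the order of $10^{-4}$ and fits inside the actual margin --- but you should justify the mesh size against the true margin rather than against a supremum that is off by $0.2$.
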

\begin{proof}
The proof is computer-assisted and the code can be found in the supplementary material. The algorithm is the classical branch and bound \cite{Neumaier:branch-and-bound}: given an interval $I$ we first compute an enclosure $z_2(I)$. If the diameter is not small enough (smaller than a given tolerance), we split $I$ into $I^{L}, I^{R}$ such that $I \subset I^{L} \cup I^{R}$ and call the same function to get their $L^{\infty}$ norms recursively. We merge the results using that
\begin{align*}
\|z_2\|_{L^{\infty}(I)} \subset \max\left\{\|z_2\|_{L^{\infty}(I^{L})}, \|z_2\|_{L^{\infty}(I^{R})}\right\},
\end{align*}
 where the $\max$ operation between intervals was defined in \eqref{defMAX}. For a tolerance equal to $2 \cdot10^{-6}$, our program outputs the following bound:
\begin{align*}
\|z_2\|_{L^{\infty}} \in 0.64627^{3239}_{4666}.
\end{align*}
This proves the Lemma.
\end{proof}
Thus, we can bound the contribution of the unbounded part $I_{2}^{ub}$ by
\begin{multline*}
\paa z_2(0)|\mathcal{K}| \int_{0}^\pi |\partial_{\gamma}   z_2(\gamma)| d\gamma\bigg{(}
8 \max_{\mathcal{K},h_2}\{C(M,\mathcal{K},h_2)\}\frac{e^{-M(\pi - 0.65 h_2})}{\tan(h_2)\left(\pi - 0.65h_2\right)} \\
 +  8 \max_{\mathcal{K},h_2}\{C(M,\mathcal{K},h_2)\}\frac{e^{-M(1.35h_2})}{1.35h_2}\left(M + \frac{1}{1.35h_2}\right)\bigg{)}.
\end{multline*}

For the computation of the integral of $|\partial_\gamma z_2|$, we note that this integral is linear in $h_2$ (since it is linear in $z_2$) and we use an unnormalized version of $z_2$, namely $\frac{z_2}{h_2}$ and multiply by $h_2$ at the end. This narrows the resulting interval.

We computed the bifurcation diagram depicted in Figure \ref{FigBifurcacion}. We could give an answer regarding the question of turning or not to $97.14\%$ of the parameter space. $53.23\%$ of the space turned (red) and $43.91\%$ did not turn (yellow). The remaining $2.86\%$ is painted in white. The computation was done in parallel (every core was allocated an initial region) over 8 cores. The division along the cores was made in such a way that core $i = 1,\ldots,8$ started to compute the region $\left[\frac{1}{4},\frac{5}{4}\right] \times \left[-1 + \frac{i-1}{4}, -1 + \frac{i}{4}\right]$. The average runtime was about 30 hours per core, and a total of 5960 rectangles were calculated (an average of 2.5 minutes per rectangle): 8 of the first generation, 64 of the second, 512 of the third, 1880 of the fourth and 3496 of the fifth, out of which 1871 gave a positive result (not turning), 2407 gave a negative (turning) and the rest did not give an answer to the sign and were subdivided or output to a file depending on their width.

\begin{figure}[h!]\centering
\includegraphics[trim=2cm 0mm 0mm 0mm,scale=0.55]{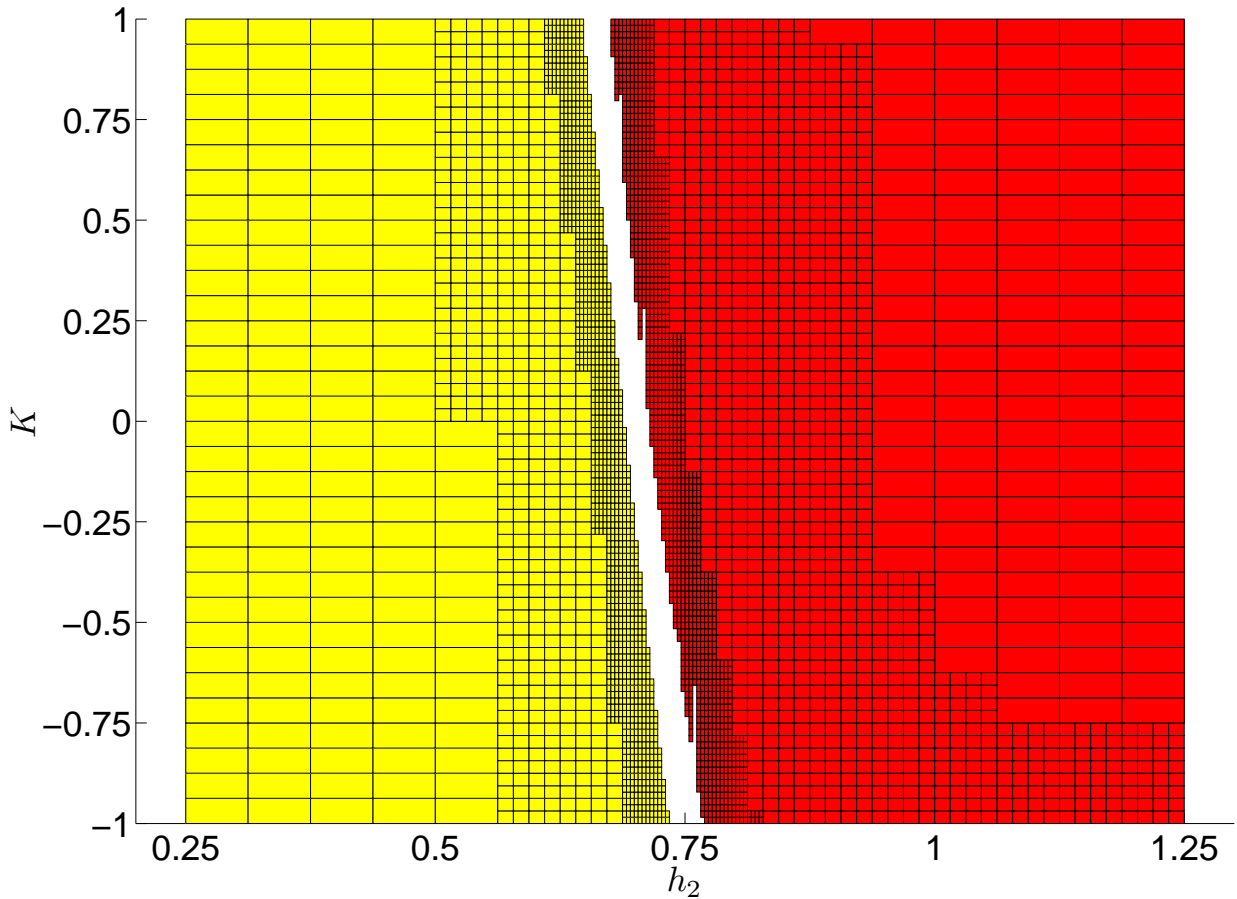}
\caption{Bifurcation diagram corresponding to the phenomenon of turning/not turning for the initial condition given by the family of curves \eqref{initialConditionBif}. Yellow (lighter color): not turning, red (darker color): turning.}
\label{FigBifurcacion}
\end{figure}

\subsubsection{Technical details concerning Theorem \ref{ThmConfinedInhomogeneous}(b)}
We want to invoke the Implicit Function Theorem. Thus, we have to check that 
$$
\frac{d}{d\mathcal{K}}\pat\paa z_1(0,0)\neq0\text{ for points }(h_2,\mathcal{K}) \text{ such that }\pat\paa z_1(0,0)=0.
$$
In particular, we have to check the previous condition in an open set containing the white region in Figure \ref{FigBifurcacion}. We compute
\begin{multline*}
DI_2 \equiv \frac{d}{d\mathcal{K}}\pat\paa z_1(0,0)=4\paa z_2(0)\int_0^\infty\int_0^\infty
\frac{\sinh(\pi y)\paa z_2(\gamma)\cos(z_1(\gamma)y)}{(\sinh(\pi y)+\mathcal{K}\sinh(2h_2 y))^2\cosh\left(y\frac{\pi}{2}\right)}\\
\times\left(2y\cosh\left(\frac{y\pi}{2}- y h_2\right)\cosh\left(\frac{y \pi}{2}\right)-\frac{2\sinh\left(y h_2\right)}{\tan(h_2)}\right)\\
\times \cosh\left(y z_2(\gamma)\right)\cosh\left(y\left(\frac{\pi}{2}-h_2\right)\right) d\gamma dy.
\end{multline*}

As in Theorem \ref{ThmConfinedInhomogeneous}(a), we divide the integral into three diferent regions: singularity, bounded and unbounded, which are calculated in the same way as for the previous Theorem. All what is left is to estimate the tails.

Using \eqref{bichaco}, we have
$$
\frac{\sinh(\pi y)}{\sinh(\pi y)+\mathcal{K}\sinh(2h_2 y)}\leq \sinh(\pi y)e^{-\pi y}C(M,\mathcal{K},h_2)\leq \frac{C(M,\mathcal{K},h_2)}{2}.
$$
With the previous estimates we have that the tail contribution can be bounded by
\begin{multline*}
\paa z_2(0) \int_{0}^\pi |\partial_{\gamma}   z_2(\gamma)| d\gamma\bigg{(}
4 \max_{\mathcal{K},h_2}\{C(M,\mathcal{K},h_2)\}^2\frac{e^{-M(\pi - 0.65 h_2})}{\tan(h_2)\left(\pi - 0.65h_2\right)} \\
 +  4 \max_{\mathcal{K},h_2}\{C(M,\mathcal{K},h_2)\}^2\frac{e^{-M(1.35h_2})}{1.35h_2}\left(M + \frac{1}{1.35h_2}\right)\bigg{)}.
\end{multline*}

Again, the computation was split among 8 cores, which took as input the intervals output as ``unknown'' in Theorem \ref{ThmConfinedInhomogeneous}(a) and ran for about 4 hours. All of them verified a negative sign for $DI_2$ in those intervals, without needing to split them into further subintervals.

\bibliographystyle{abbrv}
\bibliography{bibliografia,references}

\end{document}